\documentclass[10pt,reqno]{amsart}

\newcommand\version{October 20, 2010}


\usepackage{amsmath,amsfonts,amsthm,amssymb,amsxtra,dsfont}





\newtheorem{theorem}{Theorem}[section]
\newtheorem{proposition}[theorem]{Proposition}
\newtheorem{lemma}[theorem]{Lemma}

\theoremstyle{definition}

\newtheorem{remark}{Remark}


\numberwithin{equation}{section}


\newcommand{\C}{\mathbb{C}}

\renewcommand{\epsilon}{\varepsilon}

\newcommand{\N}{\mathbb{N}}

\newcommand{\R}{\mathbb{R}}

\newcommand{\Sph}{\mathbb{S}}

\DeclareMathOperator{\Div}{div}

\DeclareMathOperator{\im}{Im}

\DeclareMathOperator{\spa}{span}
\DeclareMathOperator{\spec}{spec}

\renewcommand{\leq}{\leqslant}
\renewcommand{\geq}{\geqslant}

\renewcommand{\to}{\rightarrow}


\begin{document}

\title[Ground states for boson star equation --- \version]{On ground states for \\ the  $L^2$-critical boson star equation}

\author{Rupert L. Frank}
\address{Rupert L. Frank, Department of Mathematics,
Princeton University, Washington Road, Princeton, NJ 08544, USA}
\email{rlfrank@math.princeton.edu}

\author{Enno Lenzmann}
\address{Enno Lenzmann, Department of Mathematical Sciences, University of Copenhagen, Universitetspark 5, 2100 Copenhagen \O, Denmark}
\email{lenzmann@math.ku.dk}

\thanks{\copyright\, 2010 by the authors. This paper may be reproduced, in its entirety, for non-commercial purposes.}

\begin{abstract}
We consider ground state solutions $u \geq 0$ for the $L^2$-critical boson star equation 
$$
\sqrt{-\Delta} \, u - \big ( |x|^{-1} \ast |u|^2 \big ) u = -u \quad \mbox{in $\R^3$}.
$$
We prove analyticity and radial symmetry of $u$.

In a previous version of this paper, we also stated uniqueness and nondegeneracy of ground states for the $L^2$-critical boson star equation in $\R^3$, but the arguments given there contained a gap. However, we refer to our recent preprint \cite{FraLe} in {\tt arXiv:1009.4042}, where we prove a general uniqueness and nondegeneracy result for ground states of nonlinear equations with fractional Laplacians in $d=1$ space dimension.
\end{abstract}

\maketitle

\section{Introduction and Main Results}


We consider ground states for the {\em massless boson star equation} in $d=3$ dimensions given by
\begin{equation}
\label{eq:eq}
\left \{ \begin{array}{l} \displaystyle \sqrt{-\Delta} \, u - \big ( |x|^{-1} \ast |u|^2 \big ) u   =  -u ,  \\[1ex]
u \in H^{1/2}(\R^3), \quad u \geq 0, \quad u \not \equiv 0 .
\end{array} \right .
\end{equation}
Here $H^s(\R^3)$ is the inhomogeneous Sobolev space of order $s \in \R$, and the symbol $\ast$ denotes the convolution on $\R^3$. 

The nonlinear equation \eqref{eq:eq} plays a central role in the mathematical theory of gravitational collapse of boson stars, which we briefly summarize as follows. In the seminal work of Lieb and Yau \cite{LiYa}, the universal constant
\begin{equation}
\label{eq:Nstar}
N_* = \| u \|_{2}^2
\end{equation}
was found to be the so-called {\em ``Chandrasekhar limiting mass''} for boson stars in the time-independent setting. Here the ground state $u \in H^{1/2}(\R^3)$, appearing in equation \eqref{eq:Nstar}, is a certain optimizer that solves problem \eqref{eq:eq}. As one main result, it was proven in \cite{LiYa} that boson stars with total mass strictly less than $N_*$ are gravitationally stable, whereas boson stars whose total mass exceed $N_*$ may undergo a ``gravitational collapse'' based on variational arguments and many-body quantum theory. Moreover, it was conjectured by Lieb and Yau in \cite{LiYa} as an open problem that uniqueness for ground states holds.

More recently, the mathematical theory of boson stars has entered the field of nonlinear dispersive equations: In \cite{ElSc}, it was shown that the {\em dynamical evolution} of boson stars is effectively described by the nonlinear evolution equation (with mass parameter $m \geq 0$)
\begin{equation}
\label{eq:boson}
i \partial_t \psi = \sqrt{-\Delta + m^2} \, \psi - \big ( |x|^{-1} \ast |\psi|^2 \big ) \psi
\end{equation}
for the wave field $\psi : [0,T) \times \R^3 \to \C$. In fact, this dispersive nonlinear $L_2$-critical PDE displays a rich variety of phenomena such as stable/unstable traveling solitary waves and finite-time blowup. In particular, the ground states $u(x) \geq 0$ for \eqref{eq:eq} and the constant $N_* >0$ given by \eqref{eq:Nstar} both play a fundamental role as follows: First, the ground state solutions $u(x)$ of \eqref{eq:eq} give rise to {\em ground state solitary waves} of the form
\begin{equation}
\psi(t,x) = e^{it} u(x)
\end{equation}
for the evolution equation \eqref{eq:boson} in the case of vanishing mass $m=0$. Second, the universal constant $N_* > 0$ sets the scale between ``small'' and ``large'' solutions of the $L_2$-critical nonlinear dispersive PDE \eqref{eq:boson}, irrespectively of the value for $m \geq 0$. More precisely, as shown in \cite{FrLe,Le2}, all solutions $\psi \in C_0^t H^{1/2}_x([0,T) \times \R^3)$ with small $L_2$-mass 
$$\| \psi(t) \|_{2}^2 < N_*$$ 
extend globally in time (i.\,e. we have $T=\infty$); whereas solutions with 
$$\| \psi(t) \|_{2}^2 > N_*$$ 
can lead to blowup at some finite time $T < \infty$. (This singularity formation indicates the dynamical  ``gravitational collapse'' of a boson star.) Thus, any analytical insight into some key properties (e.\,g., uniqueness up to translation) of the ground states $u(x) \geq 0$ and the spectrum of their linearization will be of considerable importance for a detailed blowup analysis for the nonlinear dispersive equation \eqref{eq:boson}.

\medskip
Our main result is as follows.

\begin{theorem}\label{main} {\bf (Radiality and Analyticity).}
Every solution $u \in H^{1/2}(\R^3)$ of problem \eqref{eq:eq} is of the form $u(x)=Q(x-a)$ for some $a \in \R^3$, where $Q$ satisfies the following properties.

\begin{enumerate}
\item[(i)] \label{eq:symmdecr} $Q$ is positive, radial and strictly decreasing.
\item[(ii)] $Q$ is real-analytic. More precisely, there exists a constant $\sigma>0$ and an analytic function $\tilde Q$ on $\{z\in\C^3 : |\im z_j|<\sigma, 1\leq j\leq 3 \}$ such that $\tilde Q(x)=Q(x)$ if $x\in\R^3$.
\end{enumerate}
\end{theorem}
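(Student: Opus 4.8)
My plan is to establish radial symmetry and analyticity separately, exploiting the structure of the operator $\sqrt{-\Delta}$ via its heat-kernel/Poisson-extension representation and the positivity of both the kernel of $\sqrt{-\Delta}$ and the Coulomb potential $|x|^{-1} \ast |u|^2$.

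\emph{Step 1: Regularity bootstrap and positivity.} First I would rewrite \eqref{eq:eq} as $u = (\sqrt{-\Delta}+1)^{-1}(V u)$ where $V = |x|^{-1}\ast|u|^2$. Since $u \in H^{1/2}(\R^3)$, the weak Young/Hardy--Littlewood--Sobolev inequalities give $V \in L^\infty \cap C_0$ (indeed $|u|^2 \in L^1 \cap L^{3/2}$, so $V$ is bounded and vanishes at infinity). The resolvent $(\sqrt{-\Delta}+1)^{-1}$ has a positive, integrable convolution kernel $G(x)$ which is smooth away from the origin and decays like $|x|^{-4}$ at infinity (one can obtain $G$ from the subordination formula $(\sqrt{-\Delta}+1)^{-1} = \int_0^\infty e^{-t} e^{-t^2(-\Delta)/\cdot}\,\dots$, or directly from the known explicit form). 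Iterating $u = G \ast (Vu)$ and using that $V$ is bounded gives $u \in L^p$ for all $p$, then $u \in C^{0,\alpha}$, and since $u \geq 0$, $u \not\equiv 0$, the convolution representation with the strictly positive kernel $G$ forces $u(x) > 0$ everywhere. A Lagrange-multiplier/variational characterization (the ground state is the optimizer in a Weinstein-type functional) may be invoked here, but the convolution argument is self-contained.

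\emph{Step 2: Radial symmetry via moving planes or rearrangement.} For (i), the cleanest route is the \emph{symmetric-decreasing rearrangement}: the ground state minimizes the Weinstein functional $\mathcal{J}(u) = \big(\langle u, \sqrt{-\Delta}\, u\rangle\big)^{?}\|u\|_2^{?}\big/\iint |u(x)|^2|u(y)|^2|x-y|^{-1}$ (with exponents fixed by $L^2$-criticality). Under rearrangement $u \mapsto u^*$, the quadratic form $\langle u,\sqrt{-\Delta}\,u\rangle$ does not increase (this is the fractional Pólya--Szegő inequality, valid since $\sqrt{-\Delta}$ has a nonnegative, radially decreasing jump kernel), the $L^2$-norm is preserved, and the Coulomb term strictly increases by Riesz's rearrangement inequality — with equality only for translates of radial decreasing functions. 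Hence any optimizer must, up to translation, equal its own symmetric-decreasing rearrangement, giving $Q$ radial and nonincreasing. Strict monotonicity then follows from the equation itself: writing $\sqrt{-\Delta}\,Q = (V-1)Q$ and using the pointwise expression for $\sqrt{-\Delta}$ on radial functions together with $Q>0$ rules out flat spots. (Alternatively one runs the method of moving planes directly for $\sqrt{-\Delta}$ using its Poisson extension to the half-space $\R^3\times(0,\infty)$, where $u$ becomes a genuine harmonic function and the classical maximum-principle machinery applies; the Coulomb nonlinearity is cooperative, so this goes through.)

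\emph{Step 3: Analyticity.} For (ii) I would use the Poisson/Caffarelli--Silvestre extension: $u$ extends to $U(x,t)$ harmonic on $\R^3 \times (0,\infty)$ with $-\partial_t U|_{t=0} = \sqrt{-\Delta}\,u = (V-1)u$. Since $V$ and $u$ are already known to be bounded and decaying, elliptic regularity gives $u \in C^\infty$; to upgrade to analyticity with a uniform strip of analyticity I would exploit exponential decay of $u$. From $u = G\ast(Vu)$ with $V \to 0$ at infinity and $G$ decaying only polynomially one gets at least polynomial decay, but the sharp statement uses that the symbol $|\xi|+1$ is analytic and bounded below, so $\widehat{u}$ decays fast; more robustly, a Combes--Thomas/Agmon argument on $(\sqrt{-\Delta}+1)u = Vu$ shows $e^{\sigma|x|}u \in L^2$ for some $\sigma>0$. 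Given exponential decay, I Fourier-transform the equation and show $\widehat{u}$ extends holomorphically to a tube $\{|\im\zeta|<\sigma\}$ with $L^1$ bounds, whence $u$ itself extends analytically to the complex strip $\{z \in \C^3 : |\im z_j| < \sigma\}$ as claimed. The \textbf{main obstacle} I anticipate is precisely this last point: $\sqrt{-\Delta}$ is nonlocal, so analyticity does not propagate by a local Cauchy--Kovalevskaya-type argument, and one must genuinely control the complex-analytic continuation of the convolution kernel of $(\sqrt{-\Delta}+1)^{-1}$ (or of $e^{-t\sqrt{-\Delta}}$) together with the exponential decay of $u$ uniformly in the imaginary directions — keeping track of how the strip width $\sigma$ is constrained by both the decay rate of $u$ and the singularity structure of the kernel.
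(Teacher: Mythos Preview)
Your Step 1 is sound and matches the paper's preliminary lemmas. The genuine problems are in Steps 2 and 3.

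In Step 2, your primary rearrangement argument applies only to \emph{minimizers} of the Weinstein functional, whereas the theorem concerns \emph{every} nonnegative solution of \eqref{eq:eq}. You have not shown that any nonnegative solution is a minimizer, and this is not obvious (it is essentially a uniqueness statement, which the paper explicitly leaves open). The paper therefore uses your parenthetical alternative, the method of moving planes: it proves a non-local Hopf lemma for $\sqrt{-\Delta}$ directly from the positivity of the Poisson kernel (no extension needed), and combines it with precise $|x|^{-4}$ and $|x|^{-6}$ asymptotics of $u$ and of reflected differences $u-u_\lambda$ to start the plane at infinity and to rule out a positive stopping position.

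The fatal gap is in Step 3. Your argument rests on exponential spatial decay $e^{\sigma|x|}u\in L^2$, but this is \emph{false}: the paper establishes matching upper and lower bounds $u(x)\asymp (1+|x|)^{-4}$ (Lemma \ref{decay}). The symbol $|\xi|+1$ is bounded below but has a conical singularity at $\xi=0$, so the resolvent kernel of $(\sqrt{-\Delta}+1)^{-1}$ decays only like $|x|^{-4}$ (see \eqref{eq:reskernel}); no Combes--Thomas or Agmon argument can beat this. Hence your route from exponential decay of $u$ to analyticity collapses entirely. The paper's method is quite different and works purely in Fourier space: writing the equation as $(|\xi|+1)\hat u = w*\hat u$ with $|\xi|^2 w = (2\pi^2)^{-1}\hat u*\hat u$, it proves by induction on $n$ (closing the recursion with Abel's combinatorial identity) that $\||\xi|^n\hat u\|_\infty \leq a\,b^n(2n+1)^{n-1}$. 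Summing in $n$ gives $\sup_\xi e^{\tau|\xi|}|\hat u(\xi)|<\infty$ for small $\tau$, which is exactly analyticity of $u$ in a complex strip. The point you are missing is that one must obtain exponential decay of $\hat u$, not of $u$, and this has to be extracted from the convolution structure of the equation rather than from any spectral-gap mechanism.
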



\begin{remark}
A natural open question is uniqueness of the ground state $Q=Q(|x|) > 0$. We refer to our recent work \cite{FraLe}, where uniqueness has been proven for ground state of nonlinear equations with fractional Laplacians $(-\Delta)^s$ in $d=1$ dimension.
\end{remark}

\begin{remark}
Our proof that any solution of problem \eqref{eq:eq} must be radially symmetric (with respect to some point) employs the classical method of moving planes introduced in \cite{GiNiNi}; see Section~\ref{sec:symmetry} below. See also \cite{BiLoWa} for a similar symmetry result for the moving plane method applied to equation with fractional Laplacians on the unit ball $\{ x \in \R^3 : |x| < 1\}$. We remark that the arguments, which we present in Section \ref{sec:symmetry} below, are able to deal with the unbounded domain $\R^3$, and thus settling an open problem stated in \cite{BiLoWa}.

While finalizing the present paper, we learned that \cite{MaZh} have very recently and independently established a symmetry result for the equation $-\Delta \, u - \big ( |x|^{-1} \ast |u|^2 \big ) u   =  -u$ in $\R^3$. They also briefly sketch \cite[Sec. 5]{MaZh} how to extend their approach to more general equations, including \eqref{eq:eq}. Their method is different from ours and uses the integral version of the method of moving planes developed in \cite{ChLiOu}. We believe that our non-local Hopf's lemma, on which our differential version of moving planes is based, might have applications beyond the context of the present paper.
\end{remark}

\begin{remark} \label{rem:rescale}
Note that Theorem \ref{main} implies an analagous statement for positive solutions of the equation 
$$\sqrt{-\Delta}\, u - \kappa(u^2*|x|^{-1}) u = -\lambda u.$$ 
with constants $\kappa,\lambda>0$. Indeed, $u$ solves this equation if and only if $\kappa^{-1/2} \lambda^{-3/2} u(x/\lambda)$ solves \eqref{eq:eq}. One might also ask whether this equation can have a solution for $-\lambda=E\geq0$. The answer is negative, even if the positivity assumption of $u$ is dropped, as shown by the next result (whose proof is given in Subsection \ref{sec:nonexist} below). Without loss of generality,  we can put  $\kappa=1$ in the following. 
\end{remark}

\begin{proposition}\label{nonexist}
Let $E\geq0$. If $u\in H^{1/2}(\R^3)$ is radial and satisfies $\sqrt{-\Delta} \, u - (|u|^2 * |x|^{-1})u = Eu$, then $u\equiv 0$.
\end{proposition}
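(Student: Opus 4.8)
The plan is to derive a contradiction from a Pohozaev-type (virial) identity combined with the energy identity obtained by pairing the equation against $u$ itself. First I would record the two scalar identities available for any sufficiently regular radial solution $u \in H^{1/2}(\R^3)$ of $\sqrt{-\Delta}\, u - (|u|^2 * |x|^{-1}) u = E u$. Pairing with $u$ gives
\begin{equation}
\label{eq:energyid}
\langle u, \sqrt{-\Delta}\, u\rangle - D(u) = E \|u\|_2^2,
\end{equation}
where $D(u) = \iint |u(x)|^2 |x-y|^{-1} |u(y)|^2 \, dx\, dy > 0$ is the (positive) Coulomb energy. The second ingredient is the scaling identity obtained from the dilation generator $x \cdot \nabla$: writing $u_\lambda(x) = u(\lambda x)$ and differentiating the functional $\lambda \mapsto \langle u_\lambda, \sqrt{-\Delta}\, u_\lambda\rangle - \tfrac12 D(u_\lambda) - \tfrac{E}{2}\|u_\lambda\|_2^2$ at $\lambda = 1$ (which must vanish if $u$ is a critical point), using the homogeneities $\langle u_\lambda, \sqrt{-\Delta}\, u_\lambda\rangle = \lambda^{-2}\langle u, \sqrt{-\Delta}\, u\rangle$, $D(u_\lambda) = \lambda^{-5} D(u)$, $\|u_\lambda\|_2^2 = \lambda^{-3}\|u\|_2^2$, yields the Pohozaev identity
\begin{equation}
\label{eq:pohozaev}
2\langle u, \sqrt{-\Delta}\, u\rangle - 5 D(u) = 3 E \|u\|_2^2.
\end{equation}

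With \eqref{eq:energyid} and \eqref{eq:pohozaev} in hand, I would eliminate $\langle u, \sqrt{-\Delta}\, u\rangle$: multiplying \eqref{eq:energyid} by $2$ and subtracting \eqref{eq:pohozaev} gives $-2D(u) + 5D(u) = 2E\|u\|_2^2 - 3E\|u\|_2^2$, i.e. $3 D(u) = -E \|u\|_2^2$. Since $D(u) \geq 0$ and $E \geq 0$, both sides must be zero, forcing $D(u) = 0$, hence $u \equiv 0$. (Equivalently, one also reads off $\langle u, \sqrt{-\Delta}\, u\rangle = 0$ which independently forces $u=0$.) The analogous statement for the equation with a coupling constant $\kappa$ follows by the rescaling noted in Remark~\ref{rem:rescale}, so setting $\kappa = 1$ is indeed no loss.

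The substantive obstacle is \emph{rigorously justifying the Pohozaev identity} \eqref{eq:pohozaev}, since the nonlocal operator $\sqrt{-\Delta}$ has only a $1/2$-derivative worth of regularity built in and the naive computation $\frac{d}{d\lambda}\big|_{\lambda=1} \langle u_\lambda, \sqrt{-\Delta}\, u_\lambda\rangle = -2\langle u, \sqrt{-\Delta}\, u\rangle$ presupposes that $\langle u, \sqrt{-\Delta}\, u\rangle < \infty$ and that difference quotients in $\lambda$ converge. I would handle this by first upgrading regularity: a bootstrap on the equation $\sqrt{-\Delta}\, u = (|u|^2 * |x|^{-1})u + E u$ using that $|x|^{-1} * |u|^2$ is bounded and in suitable $L^p$/Hölder spaces (the convolution kernel $|x|^{-1}$ is locally integrable and decays, and $u \in H^{1/2} \hookrightarrow L^3$) shows $u \in H^{3/2}$ and then $u \in H^s$ for all $s$, with polynomial decay; this is the same elliptic-regularity machinery underlying Theorem~\ref{main}(ii), so I would invoke it. Once $u$ is Schwartz-like (rapidly decaying, smooth), the scaling derivative is legitimate — most cleanly done on the Fourier side, where $\langle u_\lambda, \sqrt{-\Delta}\, u_\lambda\rangle = \lambda^{-3}\int |\xi/\lambda|\, |\hat u(\xi/\lambda \cdot \lambda)|^2$... more carefully, $\widehat{u_\lambda}(\xi) = \lambda^{-3}\hat u(\xi/\lambda)$, giving $\langle u_\lambda, \sqrt{-\Delta}\, u_\lambda\rangle = \int |\xi| |\widehat{u_\lambda}(\xi)|^2 d\xi = \lambda^{-2}\int|\eta||\hat u(\eta)|^2 d\eta$, and differentiability in $\lambda$ is then elementary. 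An alternative, avoiding differentiation in $\lambda$ altogether, is to pair the equation directly with the dilation field $x\cdot\nabla u$ and use the commutator identity $[\sqrt{-\Delta},\, x\cdot\nabla] = \sqrt{-\Delta}$ together with the radial structure (so that $x \cdot \nabla u = r \partial_r u \in L^2$ once decay is known) — this is the route I would actually write up, since the commutator computation is clean and the radiality hypothesis in the Proposition is exactly what makes $x\cdot\nabla u$ well-behaved.
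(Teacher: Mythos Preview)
Your approach via a Pohozaev identity plus the energy identity is essentially the same as the paper's, which combines the relativistic virial identity $\|(-\Delta)^{1/4}u\|^2 = \int |x|\,\partial_r V\,|u|^2\,dx$ (with $V = -|u|^2*|x|^{-1}$) with the energy identity and then uses Newton's theorem for a pointwise sign on $V + |x|\partial_r V$. Your packaging via the scaling homogeneity of $D(u)$ is in fact slightly cleaner and, once the identity is justified, does not really need the radiality hypothesis for the algebraic step.

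That said, your coefficients are off. The functional you wrote, $\langle u,\sqrt{-\Delta}\,u\rangle - \tfrac12 D(u) - \tfrac{E}{2}\|u\|_2^2$, is \emph{not} the action for the equation: its Euler--Lagrange equation is $2\sqrt{-\Delta}\,u - 2(|u|^2*|x|^{-1})u - Eu = 0$, so $u$ is not a critical point of it. With the correct action $\tfrac12\langle u,\sqrt{-\Delta}\,u\rangle - \tfrac14 D(u) - \tfrac{E}{2}\|u\|_2^2$, differentiation along $u_\lambda(x)=u(\lambda x)$ yields $4\langle u,\sqrt{-\Delta}\,u\rangle - 5D(u) = 6E\|u\|_2^2$, and combining with the energy identity gives $D(u) = -2E\|u\|_2^2$ (rather than $3D(u) = -E\|u\|_2^2$); the conclusion $u\equiv 0$ then follows exactly as you say. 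One further minor point on your regularity bootstrap: for $E\geq 0$ the operator $(\sqrt{-\Delta}-E)^{-1}$ is not available, so you should first rewrite the equation as $(\sqrt{-\Delta}+1)u = (E+1)u + (|u|^2*|x|^{-1})u$ before iterating to gain regularity.
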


\subsection*{Organization of the Paper}
In Sections \ref{sec:prelim}--\ref{sec:anal}, we organize the proof of Theorem \ref{main} as follows. In Section \ref{sec:prelim} we collect some preliminary results on \eqref{eq:eq} about the existence, regularity, and spatial decay of solutions. Moreover, we give the proof of Proposition \ref{nonexist}. In Section \ref{sec:symmetry} we implement the method of moving planes and we prove that every solution of \eqref{eq:eq} is spherically symmetric with respect to some point. In Section \ref{sec:anal} we prove the real analyticity of solutions. In Section \ref{sec:anal2}, we provide further analyticity results about elements in the kernel of the linearization of \eqref{eq:eq}.

\subsection*{Notation and Conventions}
For the Fourier transform on $\R^3$, we use the convention
\begin{equation}
\label{eq:fourier}
\hat u(\xi) := (2\pi)^{-3/2} \int_{\R^3} u(x) e^{-i\xi\cdot x} \,dx .
\end{equation}
As usual, the fractional derivative operators $(-\Delta)^s$ and $(1-\Delta)^s$ are defined via their multipliers $|\xi|^{2s}$ and $(1+|\xi|^2)^s$ in Fourier space, respectively. Lebesgue spaces of functions on $\R^3$ will be denoted by $L_p = L_p(\R^3)$ with norm $\| \cdot \|_{p}$ and $1 \leq p \leq \infty$. For the sake of brevity, we shall use the notation $\|u \| \equiv \| u \|_2$ occasionally. We employ inhomogeneous Sobolev norms $\| u \|_{H^s} := \| (1-\Delta)^{s/2} u \|_2$, as well as homogeneous Sobolev norms $\| u \|_{\dot{H}^s} = \| (-\Delta)^{s/2} u \|_2$. The equation \eqref{eq:eq} is always understood to hold in the $H^{-1/2}$ sense. That is, we say that $u \in H^{1/2}(\R^3)$ solves the equation in \eqref{eq:eq} if
$$
\int_{\R^3} |\xi| \overline{\hat v(\xi)} \hat u(\xi) \,d\xi - \iint_{\R^3\times\R^3} \frac{\overline{v(x)} u(x) |u(y)|^2}{|x-y|} \,dx\,dy = - \int_{\R^3} \overline{v(x)} u(x) \,dx \,, 
$$
for all $v \in H^{1/2}(\R^3)$.

In what follows, the letter $C$ denotes a constant which is allowed to change from inequality to inequality. With the usual abuse of notation, we shall not distinguish between the functions $f(|x|)$ and $f(x)$ whenever $f : \R^3 \rightarrow \C$ is radial.

\subsection*{Acknowledgments}
R.\,F. gratefully acknowledges support through DFG grant FR 2664/1-1 and NSF grant PHY 06 52854. E.\,L.~is supported by a Steno fellowship of the Danish research council and NSF grant DMS-0702492.


\section{Preliminary results} \label{sec:prelim}

To prepare the proof of our main results, we first collect some preliminary results on the existence, regularity, and decay of solutions to problem \eqref{eq:eq}. Since all these facts follow from the literature and standard arguments, we will keep our exposition brief throughout this section. 

\subsection{Existence and properties of a minimizing solution}
The existence of a nonnegative, radial solution $Q(|x|) \geq 0$ of problem \eqref{eq:eq} can be established by direct variational arguments, as remarked in \cite[App. A.2]{LiYa}. More precisely, we consider the minimization problem
\begin{equation} \label{eq:varprob}
\inf \big \{ I[u]:  u\in H^{1/2}(\R^3), \, u \not \equiv 0 \big \}  ,
\end{equation}
where
\begin{equation}
I[u] := \frac{\|(-\Delta)^{1/4} u\|^2 \ \|u\|^2}{  \iint_{\R^3 \times \R^3} |u(x)|^2 |x-y|^{-1} |u(y)|^2 \, dx\,dy} \,.
\end{equation}
Thanks to strict rearrangement inequalities (see \cite{Li,FrSe}), we have that $I[u^*] \leq I[u]$ with equality if and only if $u(x)$ equals its symmetric-decreasing rearrangement $u^*(|x|)\geq 0$ (modulo translation in space and multiplication by a complex number). As pointed out in \cite[App. A.2]{LiYa}, this fact permits us to imitate the proof in \cite{LiOx} to deduce the existence of a symmetric-decreasing minimizer $Q = Q^* \in H^{1/2}(\R^3)$ for problem (\ref{eq:varprob}). Moreover, an elementary calculation shows that any minimizer for \eqref{eq:varprob} satisfies the Euler-Lagrange equation
$$
\sqrt{-\Delta} \, Q - \kappa(|Q|^2*|x|^{-1}) Q = -\lambda Q,
$$
with some constants $\lambda >0$ and $\kappa>0$ that both depend on $Q$. By Remark \ref{rem:rescale}, we see that any symmetric-decreasing minimizer $Q = Q^* \in H^{1/2}(\R^3)$ for \eqref{eq:varprob} furnishes (after suitable rescaling) a solution of problem \eqref{eq:eq}.

\subsection{Regularity and Decay}

In this subsection, we collect some basic regularity and decay estimates for solutions $u \in H^{1/2}(\R^3)$ of the nonlinear equation
\begin{equation} \label{eq:eq1}
\sqrt{-\Delta} \, u - \big ( |u|^2 \ast |x|^{-1} \big ) u =  -u .
\end{equation}
Note that we do not require $u$ to be non-negative or even real-valued in this section, unless we explicitly say so.

\begin{lemma}[Smoothness of solutions]
\label{smooth}
Let $u\in H^{1/2}(\R^3)$ be a solution of \eqref{eq:eq1}. Then $u \in H^s(\R^3)$ for all $s \geq 1/2$.
\end{lemma}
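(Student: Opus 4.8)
The plan is to run a bootstrap argument exploiting the subcriticality of the Hartree-type nonlinearity relative to $H^{1/2}$. First I would rewrite the equation as $u = (\sqrt{-\Delta}+1)^{-1} (V_u u)$, where $V_u := |u|^2 \ast |x|^{-1}$ is the Newtonian-type potential, and where the resolvent $(\sqrt{-\Delta}+1)^{-1}$ has symbol $(|\xi|+1)^{-1}$, hence gains one full derivative: it maps $H^{s}(\R^3)$ to $H^{s+1}(\R^3)$. So the whole problem reduces to showing that multiplication by $V_u$ does not destroy too much regularity, and then iterating.

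The key input is an estimate on $V_u$. Since $u \in H^{1/2}(\R^3) \subset L_p(\R^3)$ for $2 \leq p \leq 3$ by Sobolev embedding, we have $|u|^2 \in L_q$ for $1 \leq q \leq 3/2$, and the Hardy--Littlewood--Sobolev inequality then gives $V_u = |u|^2 \ast |x|^{-1} \in L_r(\R^3)$ for a suitable range of $r$ (including some $r > 3$), and in fact one checks $V_u \in L^\infty(\R^3)$ once a little more integrability of $u$ is available; moreover $\nabla V_u = |u|^2 \ast \nabla |x|^{-1}$ is controlled similarly since $|\nabla|x|^{-1}| = |x|^{-2}$ is still locally integrable in $\R^3$. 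The cleanest route is: (a) from $u \in H^{1/2}$ deduce $V_u \in L^\infty$ and $u \in L_p$ for all $2 \leq p \leq 3$; (b) conclude $V_u u \in L_2$, hence $u = (\sqrt{-\Delta}+1)^{-1}(V_u u) \in H^1$; (c) now $u \in H^1 \subset L_p$ for all $2 \leq p \leq 6$, and iterate. For the higher-order step one uses that $V_u$ is not merely bounded but, once $u$ is regular, $V_u \in C^\infty$ with all derivatives bounded (each derivative falls on the kernel $|x|^{-1}$, and $|x|^{-1}, |x|^{-2}, \dots$ restricted away from the origin are smooth while near the origin the convolution with the $L_1 \cap L_\infty$ function $|u|^2$ stays finite). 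Then the Leibniz rule / algebra property of $H^s$ for $s$ large (or a commutator estimate $\|(1-\Delta)^{s/2}(V_u u)\|_2 \lesssim \|V_u\|_{W^{s,\infty}}\|u\|_{H^s} + \dots$) shows $V_u u \in H^s$ whenever $u \in H^s$, and the resolvent gain of one derivative promotes this to $u \in H^{s+1}$. Starting from $s = 1/2$ and incrementing by $1$ at each step (filling in the non-integer values by interpolation or by the same argument with fractional $s$), one reaches $u \in H^s$ for every $s \geq 1/2$.

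Concretely, the induction is: assume $u \in H^s$ with $s \geq 1/2$; show $V_u \in W^{s,\infty}$ (bounded together with derivatives up to order $\lceil s \rceil$, say), which follows from the bounds on $|u|^2$ and the singular-kernel estimates above and does not require more regularity of $u$ than $H^{1/2}$ for the potential itself; then $V_u u \in H^s$ by the product estimate; then $u = (\sqrt{-\Delta}+1)^{-1}(V_u u) \in H^{s+1}$.

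The main obstacle I anticipate is the bookkeeping at low regularity rather than any deep difficulty: one must be a little careful that the Hardy--Littlewood--Sobolev and Sobolev exponents line up so that $V_u u$ genuinely lands in $L_2$ (equivalently $H^0$) at the very first step — this is where the $L^2$-criticality is felt, since the scaling is borderline and one needs $V_u \in L^\infty$ (not merely $L_r$ for finite $r$) together with $u \in L_2$. Establishing $V_u \in L^\infty$ from $u \in H^{1/2}$ alone requires splitting the kernel $|x|^{-1} = |x|^{-1}\1_{|x|<1} + |x|^{-1}\1_{|x|\geq 1}$ and using $|u|^2 \in L_1 \cap L_{3/2}$: the near part is an $L_1$-type convolution with an $L_{3/2}$-weak function handled by Young/HLS, and the far part is bounded by $\|u\|_2^2$. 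Once the first step $u \in H^1$ is secured, the remaining steps are routine, since from $H^1$ onward $u \in L_p$ for all $p < \infty$ and the algebra structure of $H^s$ for large $s$ makes the Leibniz estimates automatic.
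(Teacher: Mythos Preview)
Your bootstrap via $u=(\sqrt{-\Delta}+1)^{-1}(V_u u)$ together with the one-derivative gain of the resolvent is exactly the paper's approach; the paper simply cites \cite{Le2} for the fact that $u\mapsto (|u|^2\ast|x|^{-1})u$ maps $H^s$ to itself for every $s\geq 1/2$, whereas you unpack this mapping property via regularity of the potential $V_u$.

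One technical slip worth flagging: your splitting argument for $V_u\in L^\infty$ from $u\in H^{1/2}$ alone sits precisely at the scaling-critical endpoint, and neither Young's inequality nor HLS yields an $L^\infty$ bound there (note also that $|x|^{-1}$ lies in weak-$L^3$ on $\R^3$, not weak-$L^{3/2}$ as you wrote). With only $|u|^2\in L^1\cap L^{3/2}$ and $|x|^{-1}\1_{|x|<1}\in L^{3-}$, H\"older/Young give $V_u\in L^r$ for all $r<\infty$ but miss $r=\infty$. The clean one-line fix is the Hardy--Kato inequality
\[
\sup_{x\in\R^3}\int_{\R^3}\frac{|u(y)|^2}{|x-y|}\,dy \;\leq\; \frac{\pi}{2}\,\|(-\Delta)^{1/4}u\|_2^2,
\]
which gives $\|V_u\|_\infty\leq C\|u\|_{\dot H^{1/2}}^2$ directly; the paper itself invokes exactly this inequality a few lines later in the proof of Lemma~\ref{decay}. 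With that substitution your first step $u\in H^{1/2}\Rightarrow V_u u\in L^2\Rightarrow u\in H^1$ is secure, and the rest of your iteration goes through as written.
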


\begin{proof}
This follows from a simple bootstrap argument. Indeed, note that $u$ satisfies
\begin{equation} \label{eq:uresolvent}
u = (\sqrt{-\Delta} + 1)^{-1} F(u),
\end{equation}
where we put $F(u) = (|u|^2 \ast |x|^{-1}) u$. Since $F(u)$ maps $H^s(\R^3)$ into itself for any $s \geq 1/2$ (see, e.\,g., \cite{Le2})  and thanks to the smoothing property $(\sqrt{-\Delta} + 1)^{-1} : H^s(\R^3) \rightarrow H^{s+1}(\R^3)$, we obtain the desired result. 
\end{proof}

Next, we record a decay estimate for solutions of \eqref{eq:eq1}. 

\begin{lemma}[Decay rates]
\label{decay}
Any solution $u \in H^{1/2}(\R^3)$ of \eqref{eq:eq1} satisfies
\begin{equation}
\label{ineq:upper}
|u(x)| \leq C (1 + |x|)^{-4} 
\end{equation}
and
\begin{equation}
\label{ineq:newton}
(|u|^2 \ast |x|^{-1})(x) \leq C (1 + |x|)^{-1} .
\end{equation}
Moreover, if we assume that $u(x) \geq 0$ and $u \not \equiv 0$, then we also have the lower bound
\begin{equation}
\label{ineq:lower}
u(x) \geq C (1+|x|)^{-4}.
\end{equation} 
In particular, any such solution $u(x)$ is strictly positive.
\end{lemma}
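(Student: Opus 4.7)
The plan is to reformulate (\ref{eq:eq1}) as the integral equation $u = G \ast F(u)$ with $F(u) = (|u|^2 \ast |x|^{-1})\, u$ and $G$ the integral kernel of $(\sqrt{-\Delta}+1)^{-1}$ on $\R^3$. Using the subordination identity $(\sqrt{-\Delta}+1)^{-1} = \int_0^\infty e^{-t}\, e^{-t\sqrt{-\Delta}}\,dt$ together with the explicit Poisson kernel $e^{-t\sqrt{-\Delta}}(x) = c_3\, t/(t^2+|x|^2)^2$ in $\R^3$, one finds
$$
 G(x) = c_3 \int_0^\infty \frac{t\, e^{-t}}{(t^2+|x|^2)^2}\,dt,
$$
from which a rescaling $t = |x| s$ yields the two-sided estimate $c\,(1+|x|)^{-4} \leq G(x) \leq C\min\bigl(|x|^{-2},(1+|x|)^{-4}\bigr)$. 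In particular $G>0$ pointwise, $G \in L^1(\R^3)$, and the $(1+|x|)^{-4}$ tail of $G$ is what eventually propagates to $u$.

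For the upper bound (\ref{ineq:upper}), by Lemma \ref{smooth} and Sobolev embedding $u$ is continuous, lies in $L^p$ for every $p \in [2,\infty]$, and $u(x)\to 0$ at infinity; the analogous qualitative statement for $V = |u|^2 \ast |x|^{-1}$ follows from a standard split of the Newtonian potential. A short iteration combining Hardy-Littlewood-Sobolev with Young's inequality on $u = G\ast F(u)$ improves the integrability to $u, F(u) \in L^1(\R^3)$; e.g.\ $|u|^2 \in L^1$ gives $V \in L^{3,\infty}$, hence $F(u) \in L^{6/5}$, hence $u \in L^{6/5}$ by Young, and repeating the step places $u$ and $F(u)$ in $L^1$. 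Setting $\omega(R) := \|u\|_{L^\infty(|y|\geq R)}$ and $\omega_V(R) := \|V\|_{L^\infty(|y|\geq R)}$, both tending to $0$, the splitting $(G\ast F(u))(x) = \int_{|y|<|x|/2} + \int_{|y|\geq |x|/2}$ bounds the first piece by $C\|F(u)\|_1/|x|^{4}$ (using $G(x-y) \lesssim |x|^{-4}$ there) and the second by $\omega_V(|x|/2)\,\omega(|x|/2)\,\|G\|_1$. Choosing $R$ large enough that $\omega_V(R/2)\|G\|_1 \leq \tfrac12$ yields the recursion $\omega(R) \leq C R^{-4} + \tfrac12\,\omega(R/2)$, and iterating roughly $k \sim \log_2 R$ steps gives the preliminary decay $\omega(R) \lesssim R^{-1}$.

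Once $|u(x)| \lesssim |x|^{-1}$, the sharp estimate (\ref{ineq:newton}) follows: splitting $V(x)$ as before yields $V(x) \leq 2\|u\|_2^2/|x| + \omega(|x|/2)\int |u(y)|/|x-y|\,dy \lesssim |x|^{-1}$, since $\int |u|/|x-y|\,dy \leq C(\|u\|_\infty + \|u\|_1)$ by separating $|x-y|<1$ and $|x-y|>1$. With $V \lesssim |x|^{-1}$ in hand, a clean bootstrap now closes the upper bound: if $\omega(R) \leq CR^{-\alpha}$ with $\alpha \geq 1$, then $|F(u)(y)| \leq C(1+|y|)^{-\alpha-1}$, and the same two-region split of $G\ast F(u)$ gives $\omega(R) \leq CR^{-\min(4,\alpha+1)}$. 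Iterating $\alpha:1\to 2\to 3\to 4$ yields $|u(x)| \leq C(1+|x|)^{-4}$.

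For the lower bound (\ref{ineq:lower}), assume $u \geq 0$ and $u \not\equiv 0$. Then $V > 0$ everywhere (as the Newtonian potential of a non-negative, nonzero function), so $F(u) = Vu \geq 0$ with $F(u) \not\equiv 0$; combined with $G > 0$, the identity $u = G\ast F(u)$ forces $u > 0$ pointwise. Quantitatively, for $|x|\geq 1$ and $|y|\leq 1$, the lower bound $G(x-y) \geq c(1+|x-y|)^{-4} \geq c'(1+|x|)^{-4}$ gives
$$
 u(x) \;\geq\; \int_{|y|\leq 1} G(x-y)\,F(u)(y)\,dy \;\geq\; c'(1+|x|)^{-4} \int_{|y|\leq 1} F(u)(y)\,dy \;\geq\; c''(1+|x|)^{-4},
$$
the last integral being a positive constant by continuity and strict positivity of $F(u)$ on $\{|y|\leq 1\}$. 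The main obstacle is the first bootstrap, from qualitative vanishing of $u$ up to the rate $R^{-1}$: one has to convert $\omega_V \to 0$ into a genuinely contractive functional inequality and then run the $\log_2 R$ iteration, since no a priori polynomial decay is available to start the cleaner $\alpha \mapsto \alpha+1$ scheme directly.
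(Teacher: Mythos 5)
Your proof is correct, but it follows a genuinely different route from the one the paper actually writes out. The paper treats $u$ as an $L_2$-eigenfunction of the relativistic Schr\"odinger operator $H=\sqrt{-\Delta}+V$ with $V=-|u|^2\ast|x|^{-1}$, observes that $V$ is continuous and vanishes at infinity, and then imports the upper bound \eqref{ineq:upper} from Carmona--Masters--Simon (Proposition IV.1 of \cite{CaMaSi}); the lower bound \eqref{ineq:lower} is obtained by first identifying $u$, via a Perron--Frobenius argument, as the ground state of $H$ and then invoking Proposition IV.3 of \cite{CaMaSi}. Only \eqref{ineq:newton} is proved by hand there, via Hardy--Kato plus Newton's theorem applied to the radial majorant coming from \eqref{ineq:upper}. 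Your argument is instead the self-contained, elementary alternative that the authors themselves advertise in the remark immediately following the lemma (bootstrapping $u=(\sqrt{-\Delta}+1)^{-1}F(u)$ with the explicit kernel \eqref{eq:reskernel}, for which they refer to \cite{Le3}): the two-sided bound on $G$, the contraction-type iteration $\omega(R)\leq CR^{-4}+\tfrac12\omega(R/2)$ yielding the preliminary rate $R^{-1}$, the intermediate derivation of \eqref{ineq:newton}, the $\alpha\mapsto\min(4,\alpha+1)$ bootstrap, and the positivity of $G$ for the lower bound all check out. What the paper's route buys is brevity at the cost of citing probabilistic eigenfunction-decay machinery plus a Perron--Frobenius step; what yours buys is a fully explicit proof in which the $(1+|x|)^{-4}$ rate is visibly inherited from the kernel $G$ and the lower bound needs no ground-state identification. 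One cosmetic point: from $V\in L_{3,\infty}$ and $u\in L_2$ the product lands in the Lorentz space $L_{6/5,2}$ rather than in $L_{6/5}$ itself; since $V$ is also bounded you can place it in $L_{3+\epsilon}$ and run the same integrability upgrade, so this does not affect the conclusion that $u,F(u)\in L_1$.
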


\begin{proof}
Note that $u \in L_2(\R^3)$ is an eigenfunction for the ``relativistic'' Schr\"odinger operator
$$
H := \sqrt{-\Delta} + V
$$
with the local potential $V(x) = -(|u|^2 \ast |x|^{-1})(x)$. Furthermore, by Lemma \ref{smooth} and Sobolev embeddings, we have $u \in L_p(\R^3)$ for all $p \geq 2$, which implies that $V(x)$ is continuous and $V(x) \rightarrow 0$ as $|x| \rightarrow \infty$. Hence $u$ is an eigenfunction corresponding to the eigenvalue $-1$ below the bottom of the essential spectrum of $H$. From \cite[Proposition IV.1]{CaMaSi} we now deduce the bound \eqref{ineq:upper}.

Next, we see that deriving the bound \eqref{ineq:newton} amounts to estimating the function $V(x)$ defined above. First, we note that the Hardy-Kato inequality (see, e.\,g., \cite{He}) gives us
$$
| V(x) | \leq \sup_{x \in \R^3} \int_{\R^3} \frac{|u(y)|^2}{|x-y|} \, dy \leq C \int_{\R^3} \overline{u}(y) (\sqrt{-\Delta} \, u)(y) \,dy  \leq C \| u \|_{H^{1/2}}^2.
$$
Also, from \eqref{ineq:upper} we have a radially symmetric bound for $u(x)$. Thus, by Newton's theorem (see, e.\,g., \cite[Theorem 9.7]{LiLo}), we deduce
$$
| V(x) | \leq \frac{1}{|x|} \int_{\R^3} \frac{C}{(1+|y|)^4} \, dy \leq \frac{C}{|x|} .
$$   
Combining these two estimates for $V(x)$, we obtain the bound \eqref{ineq:newton}.

Finally, let us also assume that $u(x) \geq 0$ and $u \not \equiv 0$. By standard Perron-Frobenius arguments, we conclude that $u(x)$ is the unique ground state eigenfunction for the Schr\"odinger operator $H$. In particular, invoking \cite[Proposition IV.3]{CaMaSi} yields the lower bound \eqref{ineq:lower}. \end{proof}

\begin{remark}
As an alternative to probabilistic arguments used in \cite{CaMaSi}, we could also provide a more ``hands-on'' proof of Lemma \ref{decay}, which is based on bootstrapping equation \eqref{eq:uresolvent} and using the explicit formula for the Green's function 
\begin{align}
(\sqrt{-\Delta} + \tau )^{-1}(x,y) & = \int_0^\infty e^{-t \tau} \exp (-t \sqrt{-\Delta} )(x,y) \, dt \nonumber \\ & = \frac{1}{\pi^2} \int_0^\infty e^{-t \tau} \frac{t}{(t^2 + |x-y|^2)^2} \, dt . \label{eq:reskernel}
\end{align}  
We refer to \cite{Le3} for details for this alternate proof; see, e.\,g., \cite[Section 7.11]{LiLo} for the explicit formula of the kernel.
\end{remark}


\subsection{Proof of Proposition \ref{nonexist}}\label{sec:nonexist}

Suppose $u \in H^{1/2}(\R^3)$ is radial and solves 
$$
\sqrt{-\Delta} \, u - \big (|x|^{-1} \ast |u|^2 \big ) u = E u
$$
with some constant $E \geq 0$. With $V:= - |u|^2 * |x|^{-1}$ one has the virial identity
$$
\|(-\Delta)^{1/4} u\|^2 = \int |x| \partial_r V |u|^2 \,dx \,,
$$
which can be proved along the lines of \cite[Thm. 4.21]{Th}. (The assumptions on $V$ follow easily from Newton's theorem.) Next, integrating the equation against $u$ shows that $\|(-\Delta)^{1/4} u\|^2 + \int V |u|^2 \,dx = E\|u\|^2$. Hence,
$$
\int_{\R^3} (V+|x| \partial_r V -E) |u|^2 \,dx =0 \,.
$$
But Newton's theorem gives us
$$
V(x) + |x| \partial_r V(x) = -4\pi \int_r^\infty |u(s)|^2 s\,ds \leq 0 \,,
$$
Therefore we have $(V+|x| \partial_r V -E) |u|^2 = 0$ almost everywhere. If $E > 0$, this shows directly that $u \equiv 0$. If $E=0$ holds, then we conclude $(\int_r^\infty |u(s)|^2 s \,ds ) u(r) = 0$ for almost every $r \geq 0$, which again implies $u \equiv 0$. This completes the proof of Proposition \ref{nonexist}. \hfill $\blacksquare$


\section{Symmetry} \label{sec:symmetry}

We now establish our first main result of Theorem \ref{main}. That is, any nonnegative solution $u(x) \geq 0$ of problem \eqref{eq:eq} is radially symmetric up to translation. The basic strategy rests on the method of moving planes, which was applied in \cite{GiNiNi2} to obtain a similar statement for the local elliptic equations of the form $-\Delta u + f(u) = 0$. To make the method of moving planes work successfully in our case, we establish a suitable ``non-local Hopf lemma'' below.

The goal of this section is to establish the following result.
\begin{theorem}[Symmetry]\label{symm}
Any solution of problem \eqref{eq:eq} is radial with respect to some point and strictly decreasing with respect to the distance from that point.
\end{theorem}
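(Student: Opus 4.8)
\textbf{Proof proposal for Theorem \ref{symm}.}

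The plan is to run the method of moving planes in the direction $x_1$ (with all directions equivalent after the argument is complete). For $\lambda \in \R$ write $\Sigma_\lambda = \{x \in \R^3 : x_1 < \lambda\}$, let $x^\lambda = (2\lambda - x_1, x_2, x_3)$ denote reflection across the hyperplane $T_\lambda = \{x_1 = \lambda\}$, and set $u_\lambda(x) = u(x^\lambda)$ and $w_\lambda = u_\lambda - u$. Since $\sqrt{-\Delta}$ commutes with the reflection, $w_\lambda$ satisfies, in the $H^{-1/2}$-weak sense, an equation of the form $\sqrt{-\Delta}\, w_\lambda + w_\lambda = \big(|x|^{-1}*|u_\lambda|^2\big) u_\lambda - \big(|x|^{-1}*|u|^2\big) u$, and the right-hand side can be rearranged as $\big(|x|^{-1}*|u_\lambda|^2\big) w_\lambda + \big[(|x|^{-1}*|u_\lambda|^2) - (|x|^{-1}*|u|^2)\big] u$. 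The first step is to record the key facts that make this work: $u > 0$ everywhere and $u(x), (|x|^{-1}*|u|^2)(x) \to 0$ as $|x| \to \infty$ with the quantitative rates from Lemma \ref{decay}; and a \emph{non-local Hopf lemma} (to be proved separately, presumably in this section), stating roughly that if $w \geq 0$ on $\Sigma_\lambda$, $w$ is odd under reflection across $T_\lambda$, $w$ satisfies $\sqrt{-\Delta}\, w + c(x) w \geq 0$ on $\Sigma_\lambda$ for a bounded $c$, and $w$ vanishes on an open subset of $\Sigma_\lambda$, then $w \equiv 0$. This is the analogue of the strong maximum principle / Hopf boundary lemma, and it must exploit the nonlocal structure: the value of $\sqrt{-\Delta}\, w$ at a point where $w=0$ is a weighted integral of $w$ over \emph{all} of $\R^3$, which after using the odd reflection becomes a strictly signed integral unless $w \equiv 0$.

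The second step is to start the planes from $\lambda = -\infty$. Using the decay $|u(x)| \leq C(1+|x|)^{-4}$ and $(|x|^{-1}*|u|^2)(x) \leq C(1+|x|)^{-1}$, I would show that for $\lambda$ sufficiently negative one has $w_\lambda \geq 0$ on $\Sigma_\lambda$. The natural way is a contradiction/energy argument: test the equation for $w_\lambda$ against $w_\lambda^- = \min(w_\lambda,0)$ (extended by odd reflection, so that it lives on all of $\R^3$ and the quadratic form of $\sqrt{-\Delta}$ has a favorable sign on odd functions supported in $\Sigma_\lambda \cup \Sigma_\lambda^c$), and bound the ``bad'' term $\int \big[(|x|^{-1}*|u_\lambda|^2) - (|x|^{-1}*|u|^2)\big] u\, w_\lambda^-$ by something like $C \lambda^{-\delta} \|w_\lambda^-\|_{H^{1/2}(\Sigma_\lambda)}^2$ for $\lambda \to -\infty$, which forces $w_\lambda^- \equiv 0$. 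Then define $\lambda_0 = \sup\{\lambda : w_\mu \geq 0 \text{ on } \Sigma_\mu \text{ for all } \mu \leq \lambda\}$; by continuity in $\lambda$ one gets $w_{\lambda_0} \geq 0$ on $\Sigma_{\lambda_0}$.

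The third step — and the main obstacle — is to show $\lambda_0$ cannot be finite, i.e. if $\lambda_0 < \infty$ then $w_{\lambda_0} \equiv 0$, which by $u > 0$ and strict decay considerations would be impossible unless one reinterprets it as the symmetry hyperplane. Concretely: at $\lambda_0$, $w_{\lambda_0} \geq 0$ and solves $\sqrt{-\Delta}\, w_{\lambda_0} + c_{\lambda_0}(x) w_{\lambda_0} \geq 0$ on $\Sigma_{\lambda_0}$ with $c_{\lambda_0}(x) = 1 - (|x|^{-1}*|u_{\lambda_0}|^2)(x)$ bounded (by Lemma \ref{decay}), using that the bad term $\big[(|x|^{-1}*|u_{\lambda_0}|^2) - (|x|^{-1}*|u|^2)\big] u$ has the right sign when $w_{\lambda_0} \geq 0$ — this sign bookkeeping via Newton's theorem is delicate and is the technical crux. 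The non-local Hopf lemma then gives the dichotomy $w_{\lambda_0} \equiv 0$ or $w_{\lambda_0} > 0$ on $\Sigma_{\lambda_0}$. In the first case $u$ is symmetric about $T_{\lambda_0}$ and we are done (in this direction). In the second case, I would derive a contradiction with the maximality of $\lambda_0$: using $w_{\lambda_0} > 0$ on the interior together with the decay of $u$ at infinity, a continuity/compactness argument (splitting $\Sigma_{\lambda_0+\epsilon}$ into a large ball, where $w_{\lambda_0} > 0$ is bounded below and persists for small $\epsilon$, and its complement, where decay makes the bad term small as before) shows $w_{\lambda_0 + \epsilon} \geq 0$ for small $\epsilon > 0$, contradicting the definition of $\lambda_0$. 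Running this for every direction shows $u$ is symmetric about some point; a final short argument (monotonicity of $w_\lambda$ in $\lambda$ combined with the strict version of the Hopf lemma, ruling out flat spots) upgrades ``radial'' to ``strictly decreasing in the radial variable,'' giving property (i) of Theorem \ref{main} and completing the proof. The delicate points throughout are: making the quadratic-form manipulations rigorous for the \emph{nonlocal} operator on a half-space (the operator $\sqrt{-\Delta}$ restricted to $\Sigma_\lambda$ is not local, so ``testing against $w_\lambda^-$'' must be done via the odd extension and the Caffarelli--Silvestre/harmonic-extension or Balakrishnan-type representation, or via the explicit kernel \eqref{eq:reskernel}), and the proof of the non-local Hopf lemma itself.
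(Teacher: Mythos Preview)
Your outline follows the \emph{energy/integral} variant of moving planes (testing against $(w_\lambda)^-$, narrow-region/small-measure estimates), whereas the paper runs the \emph{differential} variant in the spirit of Gidas--Ni--Nirenberg. Concretely, the paper first translates $u$ so that $\int y_1 f(y)\,dy=0$ with $f=(u^2*|x|^{-1})u$, and then proves precise pointwise asymptotics (their Lemma~\ref{asymp}): $|x|^4 u(x)\to \pi^{-2}\int f$, and more importantly $\frac{|x^j|^6}{2(\lambda^j-x_1^j)}\big(u(x^j)-u_{\lambda^j}(x^j)\big)\to \frac{4}{\pi^2}\int f(y)(\lambda-y_1)\,dy$. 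These asymptotics, combined with the center-of-mass normalization, are exactly what handles sequences $|x^j|\to\infty$ in both the starting step and the continuation step; no energy inequality or testing against $(w_\lambda)^-$ appears anywhere. The non-local Hopf lemma is used only at the critical position to get the dichotomy $w\equiv 0$ or $w>0$ with $\partial_{x_1}w<0$ on the hyperplane, and the contradiction for bounded sequences $x^j\to x$ comes from this derivative sign.

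Your route is legitimate in principle (indeed the paper's Remark after Theorem~\ref{main} notes that Ma--Zhao carried out an integral-method proof independently), but one point in your sketch would not go through as written: in Step~3 you say that outside a large ball ``decay makes the bad term small as before.'' This is not the same situation as your starting step, where $\lambda\to-\infty$ forces \emph{all} of $\Sigma_\lambda$ far from the origin. At a fixed $\lambda_0$, the set $\Sigma_{\lambda_0+\epsilon}\setminus B_R$ is neither narrow nor globally far, and because $\sqrt{-\Delta}$ is nonlocal you cannot localize the quadratic-form estimate to that set; you would instead need a small-measure argument on the support of $(w_{\lambda_0+\epsilon})^-$ together with an antisymmetric decay-at-infinity principle. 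The paper sidesteps this entirely via Lemma~\ref{asymp}(iii), whose sign is fixed by the center-of-mass normalization --- that normalization is not cosmetic but is precisely what makes the pointwise argument close at infinity. What your approach buys is robustness (no need for sharp $|x|^{-4}$ asymptotics or the explicit resolvent kernel); what the paper's approach buys is a purely pointwise argument with no functional-analytic estimates on $(w_\lambda)^-$.
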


Since radial symmetry around a point means reflection symmetry with respect to any plane passing through that point, we start by proving a result about reflections. For the sake of concreteness, we consider reflections on the plane $\{x_1=0\}$. The following assertion will immediately imply Theorem \ref{symm}.

\begin{proposition}\label{symm1}
Let $u \in H^{1/2}(\R^3)$ be a solution of problem \eqref{eq:eq} and assume that the function $f := (u^2*|x|^{-1}) u$ satisfies
\begin{equation}
\label{eq:com}
\int_{\R^3} y_1 f(y) \,dy = 0 \,.
\end{equation}
Then, for each $x'\in\R^2$ fixed, the function $u(\cdot,x')$ is symmetric with respect to the point $x_1=0$ and strictly decreasing for $x_1>0$.
\end{proposition}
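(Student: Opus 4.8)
The plan is to implement the method of moving planes adapted to the nonlocal operator $\sqrt{-\Delta}$. For $\lambda \in \R$ write $\Sigma_\lambda = \{x \in \R^3 : x_1 > \lambda\}$, let $x^\lambda = (2\lambda - x_1, x')$ be the reflection of $x$ across the plane $\{x_1 = \lambda\}$, and set $u_\lambda(x) = u(x^\lambda)$ and $w_\lambda = u_\lambda - u$ on $\Sigma_\lambda$. One would like to show that $w_\lambda \geq 0$ on $\Sigma_\lambda$ for all $\lambda \geq 0$, and symmetrically $w_\lambda \leq 0$ for $\lambda \leq 0$ (by working with $\Sigma_\lambda = \{x_1 < \lambda\}$), which forces $w_0 \equiv 0$, i.e. symmetry about $\{x_1 = 0\}$; strict monotonicity then comes from the strict version of the Hopf-type lemma. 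First I would record the equation satisfied by $w_\lambda$: since $\sqrt{-\Delta}$ commutes with the reflection, $\sqrt{-\Delta}\, w_\lambda + w_\lambda = (V_\lambda u_\lambda - V u)$ where $V = |x|^{-1} * u^2$, and after rearranging this becomes $\sqrt{-\Delta}\, w_\lambda + w_\lambda - V_\lambda w_\lambda = (V_\lambda - V) u$. Here the hypothesis \eqref{eq:com} is exactly the condition $\int y_1 f(y)\, dy = 0$ that makes $\lambda = 0$ the ``balanced'' plane; combined with the decay estimates of Lemma \ref{decay} and Newton's theorem, one shows that for $\lambda \geq 0$ one has $V_\lambda(x) - V(x) \geq 0$ for $x \in \Sigma_\lambda$ — this is the key sign computation, using that reflecting mass across $\{x_1=\lambda\}$ toward $x$ (which lies on the far side) only increases the Newtonian-type potential, together with the centering condition to handle $\lambda$ near $0$. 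Actually the cleaner route: write $V_\lambda(x) - V(x) = \iint (\ldots) (u_\lambda^2 - u^2)$ against a kernel, split $\Sigma_\lambda$ vs its complement, and use $|x - y| \leq |x^\lambda - y|$ for $x, y \in \Sigma_\lambda$.

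The heart of the argument, and what I expect to be the main obstacle, is the nonlocal Hopf lemma. Unlike the local case, $\sqrt{-\Delta}$ is not a local operator, so the inequality $\sqrt{-\Delta}\, w_\lambda + c_\lambda w_\lambda \geq 0$ on $\Sigma_\lambda$ (with $w_\lambda$ vanishing on the boundary plane and $w_\lambda(x^\lambda) = -w_\lambda(x)$ by antisymmetry) does \emph{not} immediately give a maximum principle: the singular integral defining $(\sqrt{-\Delta}\, w_\lambda)(x)$ sees values of $w_\lambda$ everywhere, including the region where it may be negative. The plan is to use the Caffarelli–Silvestre / Balakrishnan-type pointwise formula
\[
(\sqrt{-\Delta}\, w)(x) = c \int_{\R^3} \frac{w(x) - w(y)}{|x-y|^{4}}\, dy
\]
(in the principal value sense), and then exploit the \emph{antisymmetry} $w_\lambda(y^\lambda) = -w_\lambda(y)$: pairing the contributions of $y$ and $y^\lambda$ in $\Sigma_\lambda$, the kernel satisfies $|x - y| < |x - y^\lambda|$ for $x, y \in \Sigma_\lambda$, so the ``bad'' far-side contribution is strictly dominated. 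Concretely, at a would-be negative interior minimum $x_0 \in \Sigma_\lambda$ of $w_\lambda$ (assume first $w_\lambda$ is not $\geq 0$; by the decay $w_\lambda \to 0$ at infinity, a negative infimum is attained), one gets $(\sqrt{-\Delta}\, w_\lambda)(x_0) = c\int_{\Sigma_\lambda}\big[(w_\lambda(x_0) - w_\lambda(y))K_1 + (w_\lambda(x_0) + w_\lambda(y))K_2\big]\,dy$ with $K_1 = |x_0 - y|^{-4}$, $K_2 = |x_0 - y^\lambda|^{-4}$, $K_1 > K_2 > 0$; using $w_\lambda(x_0) \leq w_\lambda(y)$ and $w_\lambda(x_0) < 0$ one concludes $(\sqrt{-\Delta}\, w_\lambda)(x_0) < 0$, while $c_\lambda(x_0) w_\lambda(x_0) = (V_\lambda - V)(x_0) \cdot u(x_0) \geq 0$ wait — sign-check needed; one arranges the inequality as $\sqrt{-\Delta}\, w_\lambda + w_\lambda = (V_\lambda - V)u + V_\lambda w_\lambda \geq V_\lambda w_\lambda$, and bounds $V_\lambda \geq 0$ against the strictly negative left side to reach a contradiction. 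The strict monotonicity statement requires a strict version: if $w_0 \equiv 0$ but also we want $\partial_{x_1} u < 0$ for $x_1 > 0$, one applies the Hopf lemma at an interior point to $w_\lambda$ for $\lambda > 0$ small and notes $w_\lambda$ cannot vanish identically unless $u$ is periodic, which the decay forbids.

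The remaining steps are the standard moving-planes scaffolding, which I would carry out in this order. Step 1: starting the method — show $w_\lambda \geq 0$ on $\Sigma_\lambda$ for all sufficiently large $\lambda$; here, because the problem is on all of $\R^3$ rather than a bounded domain, I cannot use smallness of $|\Sigma_\lambda|$, so instead I use the decay $|u(x)| \leq C(1+|x|)^{-4}$ from Lemma \ref{decay}: for $x \in \Sigma_\lambda$ with $\lambda$ large, $u(x)$ is tiny while $u_\lambda(x) = u(x^\lambda) = u(2\lambda - x_1, x')$ — hmm, $x^\lambda$ can still be far out too, so more care is needed; the right statement is that $w_\lambda \to 0$ uniformly and the equation plus the Hopf-type argument rules out a negative infimum once the ``potential'' terms are small enough, which holds for large $\lambda$ by decay of $V$ in \eqref{ineq:newton}. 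Step 2: let $\lambda_0 = \inf\{\lambda \geq 0 : w_\mu \geq 0 \text{ on } \Sigma_\mu \ \forall \mu \geq \lambda\}$; by continuity $w_{\lambda_0} \geq 0$. Step 3: suppose $\lambda_0 > 0$; then either $w_{\lambda_0} \equiv 0$ on $\Sigma_{\lambda_0}$ — excluded because it would make $u$ symmetric about $\{x_1 = \lambda_0\}$, contradicting the centering condition \eqref{eq:com} which singles out $\{x_1=0\}$ (one checks $\int y_1 f = 0$ is incompatible with symmetry about a different plane when $f \geq 0$, $f \not\equiv 0$) — or $w_{\lambda_0} > 0$ in $\Sigma_{\lambda_0}$ by the strong maximum principle (the strict Hopf lemma applied in the interior), and then a continuity/compactness argument (again using uniform decay to avoid mass escaping to infinity) shows $w_\lambda \geq 0$ on $\Sigma_\lambda$ for $\lambda$ slightly below $\lambda_0$, contradicting minimality. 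Hence $\lambda_0 = 0$, giving $w_0 \geq 0$, i.e. $u(-x_1, x') \leq u(x_1,x')$ for $x_1 > 0$; the mirror argument gives the reverse, so $u(\cdot, x')$ is symmetric about $x_1 = 0$. Step 4: strict monotonicity for $x_1 > 0$ follows from applying the strict Hopf lemma to $w_\lambda$ at planes $0 < \lambda$, which gives $w_\lambda > 0$ in $\Sigma_\lambda$, i.e. $u$ strictly decreasing in $x_1$ on $\{x_1 > 0\}$. I expect Steps 1 and 3 to need genuine (but routine, given Lemma \ref{decay}) work to handle the unboundedness of $\R^3$, and the nonlocal Hopf lemma of the previous paragraph to be the only real novelty.
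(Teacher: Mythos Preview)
Your overall architecture---moving planes plus a nonlocal Hopf lemma---matches the paper's, and your antisymmetric pairing argument for the Hopf lemma (via the singular integral representation) is a valid alternative to the paper's resolvent-kernel proof of Lemma~\ref{hopf}. But there is a genuine gap in your treatment of the inhomogeneous term and of the unbounded domain. Your claim that ``for $\lambda\geq 0$ one has $V_\lambda(x)-V(x)\geq 0$ for $x\in\Sigma_\lambda$'' is circular: your own ``cleaner route'' computation gives $V_\lambda-V=\int_{\Sigma_\lambda}(u_\lambda^2-u^2)\,K$ with $K>0$, so the sign of the right-hand side is exactly the sign of $w_\lambda$ on $\Sigma_\lambda$---which is what you are trying to prove. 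The centering condition \eqref{eq:com} does \emph{not} fix this a priori. In the paper the inhomogeneity $f$ is shown to be $\geq 0$ only \emph{after} $w_{\lambda_1}\geq 0$ has been established at the critical plane (Step~2), i.e.\ as input to the Hopf lemma, never as a starting point.

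This leaves Step~1 (starting the planes) and the escape-to-infinity scenario in Step~3 as the real work, and your ``$V$ is small for large $\lambda$'' sketch does not close: at a would-be negative minimum $x_0$ you need to beat a term of size $|V_\lambda(x_0)w_\lambda(x_0)|$ with $(\sqrt{-\Delta}\,w_\lambda)(x_0)+w_\lambda(x_0)$, and both sides are small of comparable order when $|x_0|\to\infty$. The paper resolves this with a separate lemma (Lemma~\ref{asymp}) giving \emph{precise} asymptotics of $u$, $\partial_{x_1}u$, and $u-u_\lambda$ from the explicit resolvent kernel of $(\sqrt{-\Delta}+1)^{-1}$: part~(ii) yields $\partial_{x_1}u<0$ for $x_1\geq\overline\lambda$ (so reflected values decrease once past $\overline\lambda$), and part~(iii) gives $\frac{|x^j|^6}{2(\lambda-x_1^j)}(u(x^j)-u_\lambda(x^j))\to \frac{4}{\pi^2}\int f(y)(\lambda-y_1)\,dy$. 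The centering condition \eqref{eq:com} enters \emph{here}---it makes this limit equal to $\frac{4\lambda}{\pi^2}\int f>0$ for $\lambda>0$, which is exactly what rules out sequences escaping to infinity in both Step~1 and Step~3. Without an analogue of Lemma~\ref{asymp}, your Steps~1 and~3 do not go through.
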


Before we turn to the proof of Proposition \ref{symm1}, we first give the proof of Theorem \ref{symm}.

\begin{proof}[Proof of Theorem \ref{symm} assuming Proposition \ref{symm1}]
Let $u$ be a solution of problem \eqref{eq:eq} and define $f := (u^2*|x|^{-1}) u$. Since $u \geq 0$ and $u \not \equiv 0$, we have $\int_{\R^3} f(y) \, dy > 0$. Thus there exists a translation $a\in\R^3$ such that 
\begin{equation}
\label{eq:comall}
\int_{\R^3} y_j f(y-a) \,dy = 0, 
\qquad
\text{for}\ j=1,2,3 \,.
\end{equation}
(We note that the integrals converge absolutely in view of the estimates from Lemma \ref{decay}.) For any orthogonal matrix $R\in O(3)$, the function $v_R(x) := u(Rx-a)$ is a solution of \eqref{eq:eq} and the normalization \eqref{eq:comall} implies that $f_R:= (v_R^2*|x|^{-1}) v_R$ satisfies \eqref{eq:com}. Hence, by Proposition \ref{symm1}, the function $v_R(x)$ is symmetric with respect to $x_1=0$ and strictly decreasing for $x_1>0$. Since the rotation $R \in O(3)$ is arbitrary, this means that $u$ is radial with respect to $a$ and strictly decreasing as a function of $|x-a|$.
\end{proof}

The proof of Proposition \ref{symm1} will be given in Subsection \ref{sec:symm1proof} after having proved two preliminary results. In this section we use the following notation. For any $\lambda\in\R$ and any point $x=(x_1,x') \in \R \times \R^2$, we denote by 
\begin{equation}
\label{eq:xrefl}
x^\lambda := (2\lambda-x_1,x')
\end{equation}
its reflection with respect to the hyperplane $\{x_1=\lambda\}$. Moroever, the reflection of a function $u$ on $\R^3$ with respect to the hyperplane $\{ x_1=\lambda\}$ will be denoted by 
\begin{equation}
\label{eq:urefl}
u_\lambda(x) := u(x^\lambda). 
\end{equation}


\subsection{Asymptotics of the solution}\label{sec:asymp}

Recall from Lemma \ref{decay} that any solution $u$ of \eqref{eq:eq} decays like $|x|^{-4}$. To make the method of moving planes work, we need more precise asymptotics of $u$ and its first derivative. To this end, we consider the equations of the following general form: 
\begin{equation}\label{eq:asymp}
\sqrt{-\Delta} \, u =-u +f ,
\end{equation}
where the inhomogeneity $f(x)$ is some given measurable function on $\R^3$. Clearly, this equation coincides with equation in \eqref{eq:eq} if we put $f:=(u^2*|x|^{-1}) u$; and according to our a-priori estimates from Lemma \ref{decay}, we then have $0< f(x) \leq C (1+|x|)^{-5}$. In fact, our asymptotics will be valid for more general inhomogeneities $f(x)$. The precise statement is as follows.

\begin{lemma}\label{asymp}
Assume that $u\in H^1(\R^3)$ satisfies \eqref{eq:asymp} with $|f(x)| \leq C (1+|x|)^{-\rho}$ for some $\rho>4$. Then
\begin{enumerate}
\item[(i)] $\lim_{|x|\to\infty} |x|^4 u(x) = \pi^{-2} \int f(y) \,dy$.
\item[(ii)] $\lim_{x_1\to\infty} \frac{|x|^6}{x_1} \frac{\partial u}{\partial x_1}(x) = - 4 \pi^{-2} \int f(y) \,dy$.
\item[(iii)]
If $\lambda^j\to\lambda$ and $|x^j|\to\infty$ with $x_1^j<\lambda^j$, then
\begin{equation*}
\lim_{j\to\infty} \frac{|x^j|^6}{2(\lambda^j -x_1^j)} \left(u(x^j)-u_{\lambda^j}(x^j)\right) = \frac4{\pi^2} \int_{\R^3} f(y) (\lambda-y_1) \,dy \,,
\end{equation*}
where $u_{\lambda^j}(x)$ is defined in \eqref{eq:urefl} above.
\end{enumerate}
\end{lemma}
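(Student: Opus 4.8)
The plan is to obtain the asymptotics from an explicit representation of $u$ via the resolvent kernel of $\sqrt{-\Delta}+1$. Since $u\in H^1(\R^3)$ solves $\sqrt{-\Delta}\,u=-u+f$, we have $u=(\sqrt{-\Delta}+1)^{-1}f$, so
$$
u(x) = \int_{\R^3} G(x-y) f(y)\,dy, \qquad G(z) = \frac{1}{\pi^2}\int_0^\infty e^{-t}\frac{t}{(t^2+|z|^2)^2}\,dt,
$$
using the formula \eqref{eq:reskernel} quoted above (with $\tau=1$). The whole lemma then reduces to understanding the large-$|z|$ behaviour of $G$ and of its first derivatives, and then splitting the convolution integral into a region $|y|\leq |x|/2$ (where $G(x-y)$ can be Taylor-expanded and $f$ contributes its moments, all of which converge absolutely because $\rho>4$) and a region $|y|>|x|/2$ (where the decay of $f$ together with crude bounds on $G$ makes the contribution of lower order).

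For (i), the key elementary fact is that $G(z) = \pi^{-2}|z|^{-4}\int_0^\infty e^{-t/|z|} t\,(t^2+1)^{-2}\,dt \to \pi^{-2}|z|^{-4}\int_0^\infty t\,(t^2+1)^{-2}\,dt = \tfrac12\pi^{-2}|z|^{-4}$ — wait, one must be careful with the normalization; rescaling $t\mapsto |z|t$ in \eqref{eq:reskernel} gives $G(z)=\pi^{-2}|z|^{-2}\int_0^\infty e^{-|z|t}\,t\,(t^2+1)^{-2}\,dt$, and as $|z|\to\infty$ the dominant contribution comes from small $t$, giving $G(z)\sim \pi^{-2}|z|^{-2}\int_0^\infty e^{-|z|t}t\,dt = \pi^{-2}|z|^{-4}$. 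Thus $|z|^4 G(z)\to \pi^{-2}$. Feeding this into the split convolution and using dominated convergence on the inner region (the outer region being $o(|x|^{-4})$ since $\int_{|y|>|x|/2}|f(y)|\,dy = O(|x|^{3-\rho})=o(1)$ and $\sup G$ on the relevant set is $O(|x|^{-4})$ or better via the Newton-type bound $G*\1_{|y|>|x|/2}$) yields $|x|^4 u(x)\to\pi^{-2}\int f$.

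For (ii) and (iii) the same machinery applies to $\partial_{x_1}G$ and to the difference $G(x-y)-G(x^\lambda-y)$. One computes $\partial_{z_1}G(z) = -\tfrac{4}{\pi^2} z_1 \int_0^\infty e^{-t}\frac{t}{(t^2+|z|^2)^3}\,dt$, and the same rescaling shows $\partial_{z_1}G(z)\sim -4\pi^{-2} z_1 |z|^{-6}$ as $|z|\to\infty$ with $z_1$ growing proportionally to $|z|$; this gives (ii) after the convolution split, the outer region again being lower order. For (iii) one writes $u(x^j)-u_{\lambda^j}(x^j) = \int_{\R^3}\big(G(x^j-y)-G(x^{j,\lambda^j}-y)\big)f(y)\,dy$; since $\lambda^j\to\lambda$ is bounded and $x_1^j<\lambda^j$ with $|x^j|\to\infty$, the difference of arguments is essentially $2(\lambda^j-x_1^j)$ in the first coordinate, and a first-order Taylor expansion of $G$ in $z_1$ together with part of the asymptotics for $\partial_{z_1}G$ gives, after passing the moment $\int f(y)(\lambda-y_1)\,dy$ through (which converges absolutely because $\rho>4$), the stated limit $\tfrac{4}{\pi^2}\int f(y)(\lambda-y_1)\,dy$.

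The main obstacle is the bookkeeping of uniformity: one needs the Taylor remainder estimates for $G$ and $\partial_{z_1}G$ on the inner region $|y|\leq|x|/2$ to be uniform in the relevant directions (and, in (iii), uniform as $\lambda^j$ ranges over a bounded set and $x^j$ over the half-space $x_1^j<\lambda^j$, where $x_1^j$ need not tend to $+\infty$ — only $|x^j|\to\infty$), and one must verify that the contribution of the outer region $|y|>|x|/2$ is genuinely $o$ of the leading term in each of the three cases. This requires sharp two-sided bounds on $G(z)$ and $|\nabla G(z)|$ of the form $c(1+|z|)^{-4}$ and $C|z|(1+|z|)^{-6}$ respectively, obtained by the same Laplace-type analysis of \eqref{eq:reskernel}, plus a Newton-theorem estimate to control $\int_{|y|>|x|/2} G(x-y)|f(y)|\,dy$. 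Once these kernel estimates are in hand, the three limits follow by dominated convergence with no further difficulty.
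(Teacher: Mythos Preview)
Your proposal is correct and follows the same approach as the paper: both start from the representation $u=(\sqrt{-\Delta}+1)^{-1}f$ via the explicit kernel \eqref{eq:reskernel} and extract the asymptotics of $G$, $\partial_{z_1}G$, and the reflected difference. The only cosmetic difference is that the paper first proves the three limits for compactly supported $f$ and then invokes a density argument in the style of \cite[Lem.~2.1]{GiNiNi2}, whereas you carry out this step directly as an inner/outer splitting $\{|y|\leq |x|/2\}\cup\{|y|>|x|/2\}$; these are two phrasings of the same estimate. One small wording issue: on the outer region $G(x-y)$ is \emph{not} uniformly $O(|x|^{-4})$ (it blows up near $y=x$), so the right way to bound that piece is what you indicate at the end, namely $|f(y)|\leq C|x|^{-\rho}$ there together with $G\in L_1(\R^3)$, giving a contribution $O(|x|^{-\rho})=o(|x|^{-4})$.
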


\begin{proof}
We write $u=(\sqrt{-\Delta}+1)^{-1}f$ and use the explicit formula \eqref{eq:reskernel} for the resolvent kernel. Calculating the corresponding results for this kernel we easily obtain the assertion of the lemma for $f$'s with compact support. We omit the details. The extension to more general $f$'s uses a density argument in the same spirit as in \cite[Lem. 2.1]{GiNiNi2}.  \end{proof}


\subsection{A non-local Hopf lemma}

As a next step, we derive the following non-local Hopf lemma.

\begin{lemma}\label{hopf}
Let $w\in H^{1}(\R^3) \cap C^1(\R^3)$ be odd with respect to the plane $\{x_1=0\}$ and assume that, for some $\tau \in \R$, we have
\begin{equation}
\label{eq:hopf}
\begin{split}
\sqrt{-\Delta}  \, w & \geq -\tau w
\qquad \text{in} \ \{x_1>0\} \,, \\
w & \geq 0
\qquad \text{in}  \ \{x_1>0\} \,.
\end{split}
\end{equation}
Then either $w\equiv 0$, or else $w>0$ in $\{x_1>0\}$ and $ \frac{\partial w}{\partial x_1} \big |_{x_1=0}>0$.
\end{lemma}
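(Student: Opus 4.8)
The plan is to exploit the explicit, pointwise-positive formula for the resolvent kernel of $\sqrt{-\Delta}+\tau$ (for suitable $\tau$) together with a Perron--Frobenius / strong maximum principle argument adapted to the nonlocal setting. First I would reduce to the case where the inequality $\sqrt{-\Delta}\,w \geq -\tau w$ holds as an inequality in $H^{-1/2}$ or distributionally on $\{x_1>0\}$, and then rewrite it in integral form. Since $w$ is odd with respect to $\{x_1=0\}$, one can work on the half-space $\{x_1>0\}$ with the odd reflection built in; the key object is the kernel of $(\sqrt{-\Delta}+\tau)^{-1}$ on $\R^3$ restricted via the odd symmetry to a Green's function $G_\tau^{\mathrm{odd}}(x,y)$ on the half-space, obtained by subtracting the value at the reflected point $y^0$. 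Using \eqref{eq:reskernel} one checks that $\exp(-t\sqrt{-\Delta})(x,y) = \pi^{-2} t (t^2+|x-y|^2)^{-2}$ is \emph{strictly monotone} in $|x-y|$, so that for $x_1,y_1>0$ the difference $\exp(-t\sqrt{-\Delta})(x,y) - \exp(-t\sqrt{-\Delta})(x,y^0)$ is strictly positive (because $|x-y|<|x-y^0|$). Hence $G_\tau^{\mathrm{odd}}(x,y)>0$ strictly on $\{x_1>0\}\times\{y_1>0\}$, and moreover $\partial_{x_1} G_\tau^{\mathrm{odd}}(x,y)\big|_{x_1=0}>0$ strictly.

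The second step is to choose $\tau'$ large enough (e.g.\ $\tau'>\tau$, or rather shift so that the relevant operator is positive/invertible with positive kernel) and write $w$ on $\{x_1>0\}$ as a nonnegative combination: from $(\sqrt{-\Delta}+\tau')w = (\tau'-\tau)w + \big(\sqrt{-\Delta}w+\tau w\big) \geq 0$ on $\{x_1>0\}$, together with $w\geq 0$ on $\{x_1>0\}$ and $w$ odd, one obtains a representation
$$
w(x) = \int_{\{y_1>0\}} G_{\tau'}^{\mathrm{odd}}(x,y)\, g(y)\,dy, \qquad x_1>0,
$$
where $g := (\sqrt{-\Delta}w+\tau w) + (\tau'-\tau)w \geq 0$ on $\{y_1>0\}$ (one must be slightly careful that $g$ is a nonnegative measure/function and that the integral converges, using $w\in H^1$ and the decay of the kernel). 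If $w\not\equiv 0$ then $g\not\equiv 0$ on $\{y_1>0\}$ — here one uses that $g\geq 0$ everywhere in the half-space and $g$ cannot vanish identically there unless $w$ does, because $w$ is recovered from $g$ via a strictly positive kernel. Strict positivity of $G_{\tau'}^{\mathrm{odd}}$ then forces $w(x)>0$ for every $x$ with $x_1>0$, and differentiating the representation at $x_1=0$ and using $\partial_{x_1}G_{\tau'}^{\mathrm{odd}}(x,y)|_{x_1=0}>0$ gives $\partial_{x_1}w(0,x')>0$.

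The main obstacle I anticipate is making the integral representation and the differentiation under the integral sign rigorous: one needs enough regularity and decay on $w$ (only $w\in H^1\cap C^1$ is assumed) to justify that $g=(\sqrt{-\Delta}+\tau')w$ makes sense as a genuine nonnegative function or measure on $\{x_1>0\}$, that the convolution with $G_{\tau'}^{\mathrm{odd}}$ converges, and that one may interchange $\partial_{x_1}$ with the integral near the boundary (the kernel's $x_1$-derivative at $x_1=0$ behaves like $y_1 (t^2+|x'-y'|^2+y_1^2)^{-3}$, which is integrable against reasonable $g$, but this requires care near $y\to\infty$ and near the boundary). A secondary subtlety is that $\tau$ may be negative or the shift $\tau'$ must be chosen so that $(\sqrt{-\Delta}+\tau')^{-1}$ has the stated positive kernel on the relevant space — but since we only need the formula \eqref{eq:reskernel} for $\tau'>0$, taking $\tau'$ large handles this, and the extra term $(\tau'-\tau)w\geq 0$ is harmless. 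I would also invoke a Perron--Frobenius-type remark (as already used in the proof of Lemma \ref{decay}) to streamline the conclusion that $g\not\equiv0$ propagates to $w>0$ everywhere, rather than re-deriving it by hand.
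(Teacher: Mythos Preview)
Your proposal is correct and follows essentially the same route as the paper: the authors also reduce to $\tau>0$ (your shift to $\tau'$ is the same move), set $h:=(\sqrt{-\Delta}+\tau)w\geq 0$ on $\{x_1>0\}$, write $w=(\sqrt{-\Delta}+\tau)^{-1}h$ via the explicit Poisson/resolvent kernel, and use the odd reflection to get a strictly positive half-space kernel whose $x_1$-derivative at $x_1=0$ is strictly positive. The technical worries you flag (that $g\in L_2$ since $w\in H^1$, that $g\not\equiv 0$ via Fourier transform, and differentiation under the integral by dominated convergence) are exactly the justifications the paper invokes, so no additional ideas are needed.
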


A different extension of Hopf's lemma to the non-local context is proved in \cite{BiLoWa}. Their approach does not allow for positive values of $\tau$ which, however, will be crucial for us.

\begin{proof}
Since we assume $w \geq 0$, it is sufficient to do the proof assuming that $\tau > 0$ holds. Next, we assume that $w\not\equiv 0$ and define $h:=(\sqrt{-\Delta} +\tau)w$. We note that $h$ is odd with respect to the plane $\{x_1=0\}$ and that $h\geq 0$ in $\{x_1>0\}$. Moreover, one easily sees that $h\not\equiv 0$; e.\,g.~via the Fourier transform. Next, we write
$$
w=(\sqrt{-\Delta} +\tau)^{-1}h = \int_0^\infty e^{- t\tau} \exp(-t\sqrt{-\Delta})h  \,dt \, .
$$
This shows that it is enough to prove that $\exp(-t\sqrt{-\Delta})h$ is strictly positive in $\{x_1>0\}$ and has a strictly positive $x_1$-derivative on $\{x_1=0\}$.

Using that $h$ is odd with respect to the plane $\{ x_1 = 0\}$ and writing $x=(x_1,x')$, $y=(y_1,y')$, we find (recalling formula \eqref{eq:reskernel} for the integral kernel) that
\begin{align*}
\exp(-t\sqrt{-\Delta})h (x) & = \pi^{-2} \int_{\R^3} \frac{t}{(t^2+|x-y|^2)^2} h(y) \,dy \\
& = \pi^{-2} \int_{y_1>0} \left(\frac{t}{(t^2+|x-y|^2)^2}  \right .\\
& \quad \left .  - \frac{t}{(t^2+(x_1+y_1)^2+|x'-y'|^2)^2} \right) h(y) \,dy \,.
\end{align*}
If $x_1>0$, then the integrand is non-negative and $\not \equiv 0$, and hence $\exp(-t\sqrt{-\Delta})h (x)>0$. Differentiating the above expression under the integral sign (which can be justified by dominated convergence), we find
$$
\frac{\partial}{\partial x_1} \exp(-t\sqrt{-\Delta})h (0,x')
= \frac{8}{\pi^{2}} \int_{y_1>0} \frac{t y_1}{(t^2+y_1^2+|x'-y'|^2)^3} h(y) \,dy \,,
$$
which again is strictly positive. \end{proof}


\subsection{Proof of Proposition \ref{symm1}}
\label{sec:symm1proof}

Now we are ready to implement the method of moving planes. Let $u$ be a solution of problem \eqref{eq:eq} and assume that $f := (u^2*|x|^{-1}) u$ satisfies \eqref{eq:com}.
Recalling the definition of $u_\lambda$ before Subsection \ref{sec:asymp}, we define the set
$$
\Lambda := \{ \mu>0 :\ \text{for all} \  \lambda > \mu \ \text{and for all}\ x \ \text{with}\ x_1 < \lambda \ \text{one has}\  u(x)\geq u_\lambda(x) \} \,.
$$
We divide the proof of Proposition \ref{symm1} into three steps as follows.

\emph{Step 1. $\Lambda$ is non-empty.}

We first note that according to Lemma \ref{asymp} (ii) there is a $\overline\lambda>0$ such that
\begin{equation}
\label{eq:decrease}
\frac{\partial u}{\partial x_1}(x) < 0
\qquad
\text{if}\ x_1 \geq \overline\lambda \,.
\end{equation}

We now prove that $\Lambda$ is non-empty by contradiction. If $\Lambda$ were empty, there would exist sequences of numbers $(\lambda^j)\to\infty$ and points $(x^j)$ with $x^j_1<\lambda^j$ such that
\begin{equation}
\label{eq:cont1}
u(x^j)<u_{\lambda^j}(x^j) \,.
\end{equation}
Next, we claim that
\begin{equation}
\label{eq:cont1inf}
|x^j|\to\infty
\end{equation}
and, with $\overline\lambda$ from \eqref{eq:decrease},
\begin{equation}
\label{eq:cont1bdd}
x^j_1 <\overline\lambda \,.
\end{equation}
To prove our claim, we note that $(x^j)^{\lambda^j}_1>\lambda^j\to\infty$ together with the decay estimate in Lemma \ref{decay} implies that $u_{\lambda^j}(x^j)\to 0$. Therefore, by \eqref{eq:cont1}, we also have $u(x^j)\to 0$. Since $u$ is continuous by Lemma \ref{smooth} and strictly positive by Lemma \ref{decay}, we obtain \eqref{eq:cont1inf}. Hence the bound \eqref{eq:cont1bdd} follows from \eqref{eq:decrease} and \eqref{eq:cont1}.

Now choose $j$ sufficiently large such that $\lambda^j>\overline\lambda$ holds. Then \eqref{eq:cont1bdd} implies that $\overline\lambda<(x^j)^{\overline\lambda}_1< (x^j)^{\lambda^j}_1$. Thus, by \eqref{eq:decrease}, we conclude
\begin{equation}
\label{eq:cont1est}
u_{\overline\lambda}(x^j)> u_{\lambda^j}(x^j) \,.
\end{equation}

On the other hand, \eqref{eq:cont1inf}, \eqref{eq:cont1bdd} and \eqref{eq:com} together with Lemma \ref{asymp} (iii) (and $\overline\lambda$ instead of $\lambda^j$) imply that
$$
\frac{|x^j|^6}{2(\overline\lambda -x_1^j)} \left(u(x^j)-u_{\overline\lambda}(x^j)\right) \to \frac{4\overline\lambda}{\pi^2} \int f(y) \,dy >0 \,,
$$
contradicting \eqref{eq:cont1} and \eqref{eq:cont1est}. Hence the set $\Lambda$ is non-empty.


\emph{Step 2. $\lambda_1:=\inf\Lambda=0$.}

Again, we argue by contradiction and assume that $\lambda_1>0$. We note that $u(x)\geq u_\lambda(x)$ for all $x$ with $x_1<\lambda$ and all $\lambda>\lambda_1$. Hence, by continuity (see Lemma \ref{smooth}), we also have $u(x)\geq u_{\lambda_1}(x)$ if $x_1<\lambda_1$. Note that the function $w:= u-u_{\lambda_1}$ satisfies the equation
$$
\sqrt{-\Delta} w + Vw=-w +f\,,
\qquad
V := -\frac12 (u^2+u_{\lambda_1}^2)*|x|^{-1}
$$
with inhomogeneity
$$
f(x) := \frac12 (u(x)+u_{\lambda_1}(x)) \left( (u^2-u_{\lambda_1}^2)*|x|^{-1} \right)(x) .
$$
Next, a calculation shows that
\begin{multline*}
f(x) = \frac12 (u(x)+u_{\lambda_1}(x))  \int_{y_1<\lambda_1} \left( \frac1{|x-y|} \right . - \\  \left . \frac1{\sqrt{(x_1+y_1-2\lambda_1)^2 +|x'-y'|^2}} \right) \left( u(y)^2-u_{\lambda_1}(y)^2 \right) \,dy \,.
\end{multline*}
Since $|x-y|<\sqrt{(x_1+y_1-2\lambda)^2 +|x'-y'|^2}$ if $x_1,y_1<\lambda_1$, we see that $f\geq 0$ in $\{x_1<\lambda_1\}$ and hence $\sqrt{-\Delta} w \geq-w-Vw\geq -w$ in that set. Moreover, recall that $w = u - u_{\lambda_1}$ belongs to all $H^s(\R^3)$. Therefore, by the non-local Hopf lemma (Proposition \ref{hopf}), we either have $w\equiv 0$, or else
\begin{equation}
\label{eq:cont2hopf}
w>0
\ \text{in}\ \{x_1<\lambda_1\}
\qquad
\text{and}
\qquad
\frac{\partial w}{\partial x_1}(x) < 0
\ \text{on}\ \{x_1=\lambda_1\} \,.
\end{equation}
The first case cannot occur since $w\equiv 0$, $\lambda_1>0$ and \eqref{eq:com} imply $u\equiv 0$, but for $u\equiv 0$ one has $\lambda_1=0$.

Hence we will assume that \eqref{eq:cont2hopf} holds. By definition of $\lambda_1$, there exist sequences of numbers $(\lambda^j)\to\lambda_1-$ and points $(x^j)$ with $x^j_1<\lambda^j$ such that
\begin{equation}
\label{eq:cont2}
u(x^j)<u_{\lambda^j}(x^j) \,.
\end{equation}
Passing to a subsequence if necessary, we may either assume that $x^j\to x$ or else that $|x^j|\to\infty$.

If $x_j\to x$, then \eqref{eq:cont2} implies $u(x)\leq u_{\lambda_1}(x)$. Moreover, since $x_1\leq\lambda_1$, the first relation in \eqref{eq:cont2hopf} allows us to deduce that $x_1=\lambda_1$ and $u(x)= u_{\lambda_1}(x)$. Now \eqref{eq:cont2} yields $\frac{\partial u}{\partial x_1}(x)\geq 0$, contradicting the second relation in \eqref{eq:cont2hopf}.

If $|x^j|\to\infty$, then we argue as in Step 1 (using Lemma \ref{asymp} (iii) with the sequence $(\lambda^j)$) to arrive at a contradiction.


\emph{Step 3. Conclusion.}

In the previous step we have shown that $u(x)\geq u_\lambda(x)$ if $x_1<\lambda$ for any $\lambda>0$. Hence by continuity $u(x)\geq u(-x_1,x')$ if $x_1<0$. Repeating the same argument with $u(x)$ replaced by $u(-x_1,x')$ (and noting that the choice of the origin in \eqref{eq:com} is not affected by this replacement) yields the reverse inequality $u(-x_1,x')\geq u(x)$ if $x_1<0$. Hence $u(\cdot,x')$ is symmetric with respect to $x_1=0$. Using the nonlocal Hopf lemma (Proposition \ref{hopf}) as in Step 2, we find that if $\lambda>0$ then $u(x)> u_\lambda(x)$ for all $x_1<\lambda$. This means that $u(\cdot,x')$ is strictly decreasing for $x_1>0$. The proof of Proposition \ref{symm1} is complete. \hfill $\blacksquare$

\section{Real analyticity}\label{sec:anal}

In this section, we prove that any real-valued solution of the equation \eqref{eq:eq1} is real-analytic, which is a substantial improvement of Lemma \ref{smooth} above. Our proof will derive pointwise exponential decay in Fourier space. A similar argument has been applied in the analyticity proof of solitary waves for some nonlinear water wave equations in $d=1$ spatial dimension; see \cite{LiBo}. However, apart from higher dimensionality, our case also involves a nonlocal nonlinearity. To deal with this difficulty of a nonlocal nonlinearity, we derive exponential bounds in Fourier space for a coupled system of equations.

Our main result on analyticity is as follows.
\begin{theorem}\label{anal}
Let $u\in H^{1/2}(\R^3)$ be a real-valued solution of \eqref{eq:eq1}. Then there exists a constant $\sigma>0$ and an analytic function $\tilde u$ on $\{z\in\C^3 :\ |\im z_j|<\sigma, 1\leq j\leq 3 \}$ such that $\tilde u(x)=u(x)$ if $x\in\R^3$.
\end{theorem}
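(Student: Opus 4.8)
The plan is to prove exponential decay of the Fourier transform $\hat u(\xi)$, since a function whose Fourier transform decays like $e^{-\sigma|\xi|}$ extends analytically to the strip $\{|\im z_j|<\sigma\}$ (this is the standard Paley--Wiener type criterion; I would state it as a lemma). The difficulty is the nonlocal term $V := |u|^2 * |x|^{-1}$, whose Fourier transform is a convolution, so a naive Gronwall/bootstrap on $\hat u$ alone does not close. The idea I would follow is to work simultaneously with $\hat u$ and with $\widehat{Vu}$ (or equivalently with $\hat u$ and $\widehat{u^2}$), setting up a \emph{coupled} system of integral inequalities for weighted quantities.

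\medskip
\noindent\textbf{Step 1: Set-up.} From $u = (\sqrt{-\Delta}+1)^{-1}(Vu)$ we get the pointwise identity
$$
\hat u(\xi) = \frac{\widehat{Vu}(\xi)}{|\xi|+1}, \qquad \widehat{Vu}(\xi) = (2\pi)^{-3/2}\bigl(\hat V * \hat u\bigr)(\xi),
$$
and $\hat V(\eta) = c\,|\eta|^{-2}\,\widehat{|u|^2}(\eta)$ with $\widehat{|u|^2} = (2\pi)^{-3/2}\,\widehat{\bar u}*\hat u$. By Lemma~\ref{smooth} and Lemma~\ref{decay}, $u\in H^s$ for all $s$ and $u,|u|^2$ are Schwartz-decaying with all derivatives, so $\hat u$ is smooth with rapid polynomial decay; this is the a priori regularity I will iteratively improve. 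The quantities to control are, for a parameter $\tau\in(0,\sigma]$ to be chosen small,
$$
A(\tau) := \int_{\R^3} e^{2\tau|\xi|}\,|\hat u(\xi)|^2\,(1+|\xi|)^{2N}\,d\xi, \qquad B(\tau) := \int_{\R^3} e^{2\tau|\xi|}\,|\widehat{|u|^2}(\xi)|^2\,(1+|\xi|)^{2N}\,d\xi
$$
for a suitable fixed large $N$ (the polynomial weights absorb the $|\eta|^{-2}$ singularity near the origin and losses in the convolution estimates). The goal is to show $A(\tau),B(\tau)<\infty$ for some $\tau>0$.

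\medskip
\noindent\textbf{Step 2: Coupled estimates.} The key elementary inequality is $e^{\tau|\xi|} \leq e^{\tau|\xi-\eta|}e^{\tau|\eta|}$, which turns the exponential weight of a convolution into a product. Using this together with Young's inequality (or a weighted Schur test), I would derive inequalities of the schematic form
$$
\|e^{\tau|\cdot|}\widehat{Vu}\|_{L^2_N} \leq C\,\|e^{\tau|\cdot|}\hat V\|_{L^1_N}\,\|e^{\tau|\cdot|}\hat u\|_{L^2_N}, \qquad \|e^{\tau|\cdot|}\hat V\|_{L^1_N} \leq C\,\|e^{\tau|\cdot|}\widehat{|u|^2}\|_{\cdots}
$$
(the Hardy--Littlewood--Sobolev / Newton's theorem bound of Lemma~\ref{decay} controls the $|\eta|^{-2}$ factor away from and near the origin), and similarly $\|e^{\tau|\cdot|}\widehat{|u|^2}\|_{L^2_N}\leq C\|e^{\tau|\cdot|}\hat u\|_{L^2_N}^2$ type bounds, combined with the gain $(|\xi|+1)^{-1}$ from the resolvent. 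Then a continuity/bootstrap argument in $\tau$ (starting at $\tau=0$ where finiteness holds by Step 1, and using that the relevant maps are, say, locally Lipschitz in $\tau$) shows that the set of $\tau\geq 0$ for which $A(\tau)+B(\tau)<\infty$ is open and closed in a neighborhood of $0$, hence contains some $\tau=\sigma>0$. A cleaner alternative is the Friedrichs-mollifier / a priori estimate route: truncate $e^{\tau|\xi|}$ by $e^{\tau|\xi|/(1+\epsilon|\xi|)}$, prove an $\epsilon$-uniform bound on $A_\epsilon(\tau)+B_\epsilon(\tau)$, and let $\epsilon\to 0$ by Fatou.

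\medskip
\noindent\textbf{Step 3: From Fourier decay to analyticity.} Once $\int e^{2\sigma|\xi|}|\hat u(\xi)|^2\,d\xi<\infty$, for any $x+iy$ with $|y_j|<\sigma/\sqrt3$ (so $|y|<\sigma$) the integral $\tilde u(x+iy) := (2\pi)^{-3/2}\int \hat u(\xi)\,e^{i(x+iy)\cdot\xi}\,d\xi$ converges absolutely and locally uniformly, and differentiation under the integral sign shows $\tilde u$ is holomorphic on the polystrip; it agrees with $u$ on $\R^3$ by Fourier inversion. Adjusting the strip width gives the statement with the stated constant.

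\medskip
\noindent\textbf{Main obstacle.} The crux is Step 2: making the coupled nonlinear estimate close. The nonlocality forces one to track two (or three) norms at once and to handle the $|\eta|^{-2}$ singularity of $\hat V$ — near $\eta=0$ it is an integrable weight in $3$D but the convolution must still be controlled, which is exactly why the polynomial weights $(1+|\xi|)^N$ and the decay estimates of Lemma~\ref{decay} are needed — and to verify that the nonlinear map loses no exponential weight, i.e.\ that the factor $e^{\tau|\xi|}$ genuinely splits across the convolutions with no residual cost beyond constants. The one-dimensional water-wave analyticity arguments of \cite{LiBo} give the template, but the passage to $d=3$ and the extra convolution in the nonlinearity are where the real work lies.
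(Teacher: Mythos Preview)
Your overall strategy---prove pointwise exponential decay of $\hat u$ and then invoke Paley--Wiener---is exactly what the paper does, and your observation that the nonlocal nonlinearity forces one to control $\hat u$ and $\widehat{|u|^2}$ (equivalently $\hat V$) simultaneously is also the paper's key point. But Step~2 as written does not close, and the paper's mechanism for closing it is genuinely different from yours.

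The ``open and closed'' argument is where the gap sits. From $(|\xi|+1)\,|\hat u| \leq |w|*|\hat u|$ and submultiplicativity of $e^{\tau|\xi|}$ you get, after Young,
\[
\|e^{\tau|\cdot|}\hat u\|_{2} \;\leq\; \|e^{\tau|\cdot|}w\|_{1}\,\|e^{\tau|\cdot|}\hat u\|_{2},
\]
which is circular: the resolvent factor $(|\xi|+1)^{-1}\leq 1$ gives no smallness, and $\|w\|_1$ at $\tau=0$ has no reason to be $<1$. So finiteness of $A(\tau)+B(\tau)$ is not ``open'' in $\tau$ by this estimate alone, and closedness fails outright without a uniform quantitative bound. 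Your alternative route via the regularized weight $e^{\tau|\xi|/(1+\epsilon|\xi|)}$ \emph{can} be made to work, but not by a soft open/closed argument: one must differentiate in $\tau$ to obtain an ODE of the form $G'(\tau)\leq C\,G(\tau)\bigl(1+P(G(\tau),H(\tau))\bigr)$, and to control $\|e^{\tau|\cdot|}w\|_1$ (because of the $|\xi|^{-2}$ factor) you are forced to track \emph{both} an $L^2$-type quantity $G$ and an $L^1$- or $L^\infty$-type quantity $H$ simultaneously. Only then does the nonlinear ODE system stay bounded for small $\tau$, uniformly in $\epsilon$. None of this is in your write-up.

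The paper sidesteps the whole continuity-in-$\tau$ issue by working instead with polynomial moments: it proves by induction on $n$ that $|\xi|^n|\hat u|\leq g_n*|\hat u|$ with $\|g_n\|_1\leq a\,b^n(2n+1)^{n-1}$, the induction step being closed by the binomial expansion of $|\xi|^{n-1}\leq\sum_l\binom{n-1}{l}|\xi-\eta|^l|\eta|^{n-1-l}$ together with Abel's combinatorial identity
\[
\sum_{l=0}^{n}\binom{n}{l}(l+a)^{l-1}(n-l+b)^{n-l-1}=\tfrac{a+b}{ab}(n+a+b)^{n-1}.
\]
Summing $\tau^n/n!$ against these bounds and applying the ratio test gives $e^{\tau|\xi|}\hat u\in L_\infty$ for $\tau<(2be)^{-1}$ directly, with an explicit $\sigma$. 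This combinatorial device is what replaces your missing quantitative step; it is the main idea you would need to supply (or replace by a fully worked-out ODE argument) to turn your proposal into a proof.
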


Note that we do not assume $u$ to be non-negative. Moreover, our proof is independent of the radial symmetry established in the Section \ref{sec:symmetry}. We follow the technique developed in \cite{LiBo}. The heart of the argument is contained in the following statement.

\begin{proposition}\label{anallemma}
Let $\lambda, \alpha >0$, $0\leq f\in L_1(\R^3)$ and $0\leq W\in L_1(\R^3, (1+|\xi|)d\xi)$ such that
\begin{equation}\label{eq:anallemma}
(|\xi|+\lambda)f \leq W * f \,,
\qquad
|\xi|^2 W \leq \alpha f * f \,.
\end{equation}
Then there exist non-negative functions $g_n$, $n\in\N_0$, and constants $a,b>0$ such that
\begin{equation}
\label{eq:analind}
|\xi|^n f \leq g_n * f \,,
\qquad
\|g_n\|_1 \leq a b^{n} (2n+1)^{n-1} \,.
\end{equation}
In particular, if $f\in L_p(\R^3)$ for some $1\leq p \leq \infty$, then
\begin{equation}
\label{eq:analp}
\| |\xi|^n f \|_p \leq a b^{n} (2n+1)^{n-1} \|f\|_p \,.
\end{equation}
\end{proposition}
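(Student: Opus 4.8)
The plan is to prove the two inequalities in \eqref{eq:analind} by induction on $n$, and then deduce \eqref{eq:analp} as an immediate consequence by taking $L_p$ norms and using Young's inequality for convolutions. The base cases are easy: for $n=0$ we take $g_0$ to be (a suitable multiple of) $f$ itself — indeed $|\xi|^0 f = f$, so any $g_0$ with $g_0 * f \geq f$ works; choosing $g_0 = c_0 f$ with $c_0$ large enough forces $\|g_0\|_1 = c_0\|f\|_1$, which we match against $ab^0(2\cdot 0+1)^{-1} = a$ by picking $a$ large. For $n=1$ we use the first hypothesis in \eqref{eq:anallemma}: $(|\xi|+\lambda)f \leq W*f$ gives $|\xi| f \leq W*f$, so $g_1 = W$ works (after possibly enlarging to absorb constants), with $\|g_1\|_1 = \|W\|_1 < \infty$.

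The inductive step is the heart of the argument. Suppose \eqref{eq:analind} holds for all indices up to $n$. We want to bound $|\xi|^{n+1} f$. The idea (following \cite{LiBo}) is to write $|\xi|^{n+1} f = |\xi|^n \cdot |\xi| f \leq |\xi|^n (W*f)$ and then distribute the factor $|\xi|^n$ across the convolution using the elementary inequality $|\xi|^n = |\eta + (\xi-\eta)|^n \leq 2^{n}\big(|\eta|^n + |\xi-\eta|^n\big)$ (or better, the binomial split $|\xi|^n \leq \sum_{k=0}^n \binom{n}{k} |\eta|^k |\xi-\eta|^{n-k}$ applied inside the convolution integral). This yields a bound of $|\xi|^{n+1} f$ in terms of convolutions of pieces $|\xi|^k W$ and $|\xi|^{n-k} f$. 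The factors $|\xi|^k W$ for $k \geq 2$ are controlled by the second hypothesis $|\xi|^2 W \leq \alpha f*f$, which converts them back into convolutions of $|\xi|^{k-2} f$ with $|\xi|^0 f$ — and these are handled by the inductive hypothesis via $g_{k-2}, g_0$. The terms $k=0,1$ use $\|W\|_{L_1((1+|\xi|)d\xi)} < \infty$ directly. Collecting everything, one obtains $|\xi|^{n+1} f \leq g_{n+1}*f$ where $g_{n+1}$ is an explicit convolution-combination of the earlier $g_j$'s and $W$, and $\|g_{n+1}\|_1$ is bounded by a sum $\sum \binom{n}{k} \|(\text{stuff})\|_1 \|g_j\|_1$.

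The main obstacle — and the place where the precise constant $(2n+1)^{n-1}$ and the base $b$ must be chosen with care — is the combinatorial bookkeeping in the inductive step: one must verify that the recursion $\|g_{n+1}\|_1 \leq (\text{sum over }k\text{ of binomial}\times b^k(2k+1)^{k-1}\times b^{n-k}(2(n-k)+1)^{n-k-1}\text{-type terms})$ actually closes up to give $ab^{n+1}(2n+3)^n$. This requires an inequality of the shape $\sum_{k} \binom{n}{k}(2k+1)^{k-1}(2(n-k)+1)^{n-k-1} \leq C(2n+3)^n$ for an absolute constant $C$, which is then absorbed into $b$; such estimates are standard for Cauchy-type majorant recursions (the sequence $(2n+1)^{n-1}$ is chosen precisely so that the convolution/Leibniz recursion is supermultiplicative in the right way, akin to the combinatorics behind Cauchy's method of majorants for analyticity). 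I would isolate this as a separate elementary lemma on sequences. Once \eqref{eq:analind} is established, \eqref{eq:analp} follows at once: $\||\xi|^n f\|_p = \|g_n * f\|_p \leq \|g_n\|_1 \|f\|_p \leq ab^n(2n+1)^{n-1}\|f\|_p$ by Young's inequality. The growth $b^n(2n+1)^{n-1} \sim b^n (2n)^{n}/(2n+1) \sim (2b/e)^n n!\,/n$ up to constants, so $\sum \||\xi|^n f\|_p\, \sigma^n/n!$ converges for $\sigma$ small, giving the pointwise exponential decay of $\hat u$ and hence analyticity — but that final deduction belongs to the proof of Theorem~\ref{anal}, not to this proposition.
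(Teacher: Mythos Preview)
Your overall strategy matches the paper's proof almost exactly: induction on $n$, distribute $|\xi|^{n-1}$ across the convolution $W*f$ via the binomial expansion, control $|\xi|^l W$ for $l\geq 2$ through the second hypothesis $|\xi|^2 W \leq \alpha f*f$ and the induction hypothesis, and close the recursion combinatorially. Two points deserve correction, however.

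First, your base case $g_0 = c_0 f$ does not work: there is no reason why $c_0\,(f*f) \geq f$ pointwise for any $c_0$ (imagine $f$ with a narrow spike that $f*f$ smooths out). The right choice, which the paper makes, comes directly from the first hypothesis: since $\lambda f \leq (|\xi|+\lambda) f \leq W*f$, one takes $g_0 := \lambda^{-1} W$, and then $a \geq \lambda^{-1}\|W\|_1$ handles the norm bound. This is where the parameter $\lambda>0$ actually enters the argument.

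Second, the combinatorial closure you flag as ``a separate elementary lemma'' is in fact Abel's identity
\[
\sum_{l=0}^n \binom{n}{l}(l+a)^{l-1}(n-l+b)^{n-l-1} = \frac{a+b}{ab}(n+a+b)^{n-1},
\]
applied with $a=b=\tfrac12$ (after rescaling). This gives an \emph{equality}, not merely an inequality, and is precisely why the majorant sequence $(2n+1)^{n-1}$ is the natural choice; the paper invokes it twice, once to bound $\||\xi|^l W\|_1$ and once to bound $\|g_n\|_1$. With these two fixes your sketch becomes the paper's proof.
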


At several places in this proof we will use the so-called Abel identity,
\begin{equation}\label{eq:abel}
\sum_{l=0}^n \binom{n}{l} (l+a)^{l-1} (n-l+b)^{n-l-1} = \frac{a+b}{ab} (n+a+b)^{n-1} \,,
\end{equation}
see \cite[p. 18]{Ri}.

\begin{proof}
We prove \eqref{eq:analind} by induction over $n$. For $n=0$, \eqref{eq:analind} follows from \eqref{eq:anallemma} with $g_0:=\lambda^{-1} W$ and any $a \geq \lambda^{-1} \|W\|_1$. Now let $n\geq 1$ and assume that \eqref{eq:analind} has already been shown for all smaller values of $n$. By the triangle inequality one has
$|\xi|^{n-1} \leq \sum_{l=0}^{n-1} \binom{n-1}{l} |\xi-\eta|^l |\eta|^{n-1}$ and therefore by \eqref{eq:anallemma} and the induction hypothesis
$$
|\xi|^n f \leq \sum_{l=0}^{n-1} \binom{n-1}{l} \left( |\eta|^l W\right) * \left( |\eta|^{n-1-l} f\right)
\leq g_n * f
$$
where
$$
g_n := \sum_{l=0}^{n-1} \binom{n-1}{l} \left( |\eta|^l W\right) * g_{n-1-l} \,.
$$
Hence
\begin{equation}
\label{eq:gnnorm}
\| g_n \|_1 \leq \sum_{l=0}^{n-1} \binom{n-1}{l} \| |\eta|^l W\|_1 \|g_{n-1-l} \|_1 \,.
\end{equation}
Next, we estimate $\| |\eta|^l W\|_1$ for $l\geq 2$. For $m\in\N_0$ one has again by \eqref{eq:anallemma}, the triangle inequality and the induction hypothesis for $m<n$
$$
|\xi|^{m+2} W \leq \alpha \sum_{k=0}^m \binom{m}{k} \left(|\eta|^k f \right) * \left(|\eta|^{m-k} f \right)
\leq \alpha \sum_{k=0}^m \binom{m}{k} g_k*f*g_{m-k}*f \,.
$$
Hence
\begin{align*}
\| |\xi|^{m+2} W \|_1 & \leq \alpha \|f\|_1^2 \sum_{k=0}^m \binom{m}{k} \|g_k\|_1 \|g_{m-k}\|_1 \\
& \leq \alpha a^2 b^{m} \|f\|_1^2 \sum_{k=0}^m \binom{m}{k}  (2k+1)^{k-1} (2(m-k)+1)^{m-k-1} \\
& = 2 \alpha a^2 b^{m} \|f\|_1^2 (2m+2)^{m-1} \,,
\end{align*}
where we used Abel's identity \eqref{eq:abel} in the last calculation. In order to simplify some arithmetics below, we estimate this by
\begin{align}\label{eq:wdecay}
\| |\xi|^l W \|_1 & \leq 2 \alpha a^2 b^{l-2} \|f\|_1^2 (2l+2)^{l-1}
\end{align}
for $l\geq 2$. If we choose $a^2 b^{l-2}$ large enough, then this holds also for $l=0$ and $l=1$.

Plugging this into \eqref{eq:gnnorm} and using the induction hypothesis and again Abel's identity, we arrive at
\begin{align*}
\| g_n \|_1 & \leq 2 \alpha a^3 b^{n-3} \|f\|_1^2 \sum_{l=0}^{n-1} \binom{n-1}{l} (2l+2)^{l-1} (2(n-1-l)+1)^{n-l-2} \\
& = 3 \alpha a^3 b^{n-3} \|f\|_1^2 (2n+1)^{n-2} \,.
\end{align*}
This proves the assertion provided we have
\begin{equation}\label{eq:abchoice}
3 \alpha a^2 \|f\|_1^2
\leq b^{3} (2n+1) \,.
\end{equation}
Let us show that such a choice of parameters $a$ and $b$ is possible. We fix the ratio $a/b$ by $a^2/b^2=\|W\|_1/(\alpha\|f\|_1^2)=:c^2$ with $c>0$, so that \eqref{eq:wdecay} holds for $l=0$. Now we choose $a$ (keeping the ratio $a^2/b^4$ fixed) to be
$$
a := \max\{ \lambda^{-1} \|W\|_1,\, \||\xi|W\|_1/(2\alpha c\|f\|_1^2), \, \alpha c^3 \|f\|_1^2 \} \,.
$$
Hence \eqref{eq:analind} holds for $n=0$, \eqref{eq:wdecay} holds for $l=1$, \eqref{eq:abchoice} holds for all $n\geq 1$ and the proof is complete.
\end{proof}

\begin{proof}[Proof of Theorem \ref{anal}]
Let $u\in H^{1/2}$ be a real-valued solution of \eqref{eq:eq1} and $\hat u$ its Fourier transform \eqref{eq:fourier}. Then
$$
|\xi| \hat u - w*\hat u=-\hat u
$$
with
$$
w(\xi) := \frac{1}{(2\pi)^3} \iint_{\R^3\times\R^3} \frac{u(y)^2 e^{-i\xi\cdot x}}{|x-y|} \,dx\,dy
= \frac{1}{|\xi|^{2} 2 \pi^2} \int_{\R^3} u(y)^2 e^{-i\xi\cdot y} \,dy
= \frac{\hat u * \hat u (\xi)}{2\pi^2 |\xi|^2 } \,.
$$
Here we used that $u$ is real-valued. Hence $f:=|\hat u|$ satisfies \eqref{eq:anallemma} with $W:=|w|$, $\alpha= (2\pi^2)^{-1}$ and $\lambda=1$.

We claim that the assumptions of Lemma \ref{anallemma} are satisfied. Indeed, by Lemma \ref{decay}, we have $u\in L_1$ and hence $f\in L_\infty$. Also, by Lemma \ref{smooth}, we conclude $f\in L_1$. This implies that $\hat u *\hat u\in L_1\cap L_\infty$ and hence $W\in L_1(\R^3, (1+|\xi|)d\xi)$. Therefore we can apply Lemma \ref{anallemma} and obtain constants $a$ and $b$ such that
$$
\sup_{\xi} \left| \exp(\tau |\xi|) \hat u(\xi) \right| 
\leq \sum_n \frac{\tau^n}{n!} \||\xi|^n f\|_\infty
\leq \sum_n \alpha_n \tau^n
$$
with $\alpha_n := (n!)^{-1} a b^{n} (2n+1)^{n-1} \|f\|_\infty$. Since we find
$$
\frac{\alpha_{n+1}}{\alpha_n}
= \frac{b (2n+3)^{n} }{ (n+1) (2n+1)^{n-1}}
\to 2be \, ,
$$
the above supremum is finite for $\tau<\sigma:=(2be)^{-1}$. Thus the function
$$
\tilde u(z) := (2\pi)^{-3/2} \int_{\R^3} e^{i(x_1 z_1+x_2 z_2+x_3 z_3)} \hat u(\xi) \,d\xi
$$
is analytic in $\{z\in\C^3 :\ |\im z_j|<\sigma, 1\leq j\leq 3 \}$ and it coincides with $u$ on $\R^3$ by Plancherel's theorem.
\end{proof}

\begin{remark} \label{rem:Qmanal}
As a further corollary of Proposition \ref{anallemma}, we note that real-analycity also follows for real-valued solutions $u \in H^{1/2}(\R^3)$ satisfying
$$
\sqrt{-\Delta + m^2} \, u - \big ( u^2 \ast |x|^{-1} \big ) u =  -\mu u,
$$
where $m>0$ and $\mu > -m$ are given parameters.
\end{remark}

\section{Real analyticity II} \label{sec:anal2}

In this section, we establish (as some additional result) analyticity of kernel elements of the linearized operator associated with $Q$ solving \eqref{eq:eq}. Although the arguments will follow closely Section \ref{sec:anal}, we provide the details of the (tedious) adaptation.

Suppose that $Q \in H^{1/2}(\R^3)$ is a real-valued solution to \eqref{eq:eq1}. According to Theorem \ref{anal}, $Q$ is real-analytic. We consider the associated linearized operator
$$
L_+ \xi = \sqrt{-\Delta} \, \xi + \xi - \big ( Q^2 \ast |x|^{-1} \big ) \xi - 2 Q \big ( (Q \xi) \ast |x|^{-1} \big ).
$$
This defines a self-adjoint operator in $L^2(\R^3)$ with operator domain $H^1(\R^3)$.

We have the following result.
\begin{proposition}\label{anal2}
If $v\in\ker L_+$, then $v$ is real-analytic. More precisely, there exists a constant $\sigma$ and an analytic function $\tilde v$ on $\{z\in\C^3 : |\im z_j|<\sigma, \, 1\leq j\leq 3 \}$ such that $\tilde v(x)=v(x)$ if $x\in\R^3$.
\end{proposition}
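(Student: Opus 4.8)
The plan is to mimic the proof of Theorem~\ref{anal} closely, but now applied to a coupled system of Fourier-side inequalities that incorporates the extra nonlocal term $2Q\big((Qv)\ast|x|^{-1}\big)$ appearing in $L_+$. Write $g:=|\hat v|$, $h:=|\hat Q|$, and let $f:=|\hat Q|$ as in Section~\ref{sec:anal}. Starting from $L_+v=0$, i.e.
$$
|\xi|\hat v + \hat v - \big(w\ast\hat v\big) - 2\,\widehat{Q\,((Qv)\ast|x|^{-1})} = 0,
$$
with $w=\hat Q\ast\hat Q/(2\pi^2|\xi|^2)$ and similarly the last term being, up to the same $|\xi|^{-2}$-type kernel, a convolution $(\hat Q\ast\hat v)/(2\pi^2|\xi|^2)$ convolved once more against $\hat Q$. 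Taking absolute values, I would derive that $g$ satisfies an inequality of the form $(|\xi|+\lambda)g \le W\ast g + (\text{cross term involving }f)$, where $W=|w|$ already satisfies $|\xi|^2 W\le\alpha f\ast f$ from Section~\ref{sec:anal}, together with an analogous pointwise bound for the new kernel $\widetilde W$ governing the cross term, namely $|\xi|^2\widetilde W\le \alpha' f\ast g$ (and $f$ itself already satisfies $(|\xi|+1)f\le W\ast f$ and $|\xi|^2W\le\alpha f\ast f$ from Proposition~\ref{anallemma}'s hypotheses). So the right framework is a \emph{vectorial} version of Proposition~\ref{anallemma}: a pair $(f,g)$ of nonnegative $L_1$ functions satisfying a coupled system of the form $(|\xi|+\lambda)f\le W\ast f$, $(|\xi|+\lambda)g\le W\ast g + W'\ast f$, with $|\xi|^2 W\le\alpha f\ast f$, $|\xi|^2 W'\le\alpha' f\ast g$.

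The key steps, in order, are: (1) verify the integrability hypotheses --- by Lemma~\ref{decay} and Lemma~\ref{smooth} applied to $Q$, and by the regularity of $v\in\ker L_+\subset H^1$ (so $\hat v\in L_2$, and since $v\in L_1$ by the decay estimate for $Q$-type kernels one gets $\hat v\in L_\infty$ too), one has $f,g\in L_1\cap L_\infty$, hence $f\ast f$, $f\ast g\in L_1\cap L_\infty$ and the kernels $W,W'\in L_1(\R^3,(1+|\xi|)d\xi)$. Here one must be slightly careful that $v$ decays fast enough; I would get the needed decay of $v$ from the resolvent identity $v=(\sqrt{-\Delta}+1)^{-1}\big((Q^2\ast|x|^{-1})v + 2Q((Qv)\ast|x|^{-1})\big)$ and a bootstrap using the explicit kernel \eqref{eq:reskernel}, just as in Lemma~\ref{decay}, so that $v\in L_1$. (2) Prove by simultaneous induction on $n$ that there exist nonnegative $g_n, g_n'$ with $|\xi|^n f\le g_n\ast f$, $|\xi|^n g\le g_n'\ast g + (\text{sum of terms }g_k'\ast f\ast g_{n-1-k}\text{-type})$ and norm bounds $\|g_n\|_1, \|g_n'\|_1 \le a b^n (2n+1)^{n-1}$. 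The combinatorial engine is again Abel's identity \eqref{eq:abel}; the new feature is that the convolutions now involve three factors ($g_k'\ast f\ast g_{m-k}$ etc.), so one needs the binomial/Abel bookkeeping to close with the \emph{same} exponent $(2n+1)^{n-1}$, which works because Abel's identity composes. (3) Conclude as in the proof of Theorem~\ref{anal}: from $\||\xi|^n g\|_\infty\le ab^n(2n+1)^{n-1}\|g\|_\infty$ one gets $\sup_\xi|e^{\tau|\xi|}\hat v(\xi)|<\infty$ for $\tau<\sigma:=(2be)^{-1}$, whence $\tilde v(z):=(2\pi)^{-3/2}\int e^{iz\cdot\xi}\hat v(\xi)\,d\xi$ is analytic on the strip $\{|\im z_j|<\sigma\}$ and equals $v$ on $\R^3$ by Plancherel.

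I expect the main obstacle to be step~(2): setting up the right \emph{system} of inductive quantities so that the coupling term does not spoil the factorial growth rate, and checking that the three-fold convolutions (two copies of $f$ and one $g_k'$) still collapse under Abel's identity to give $\|g_n'\|_1\lesssim a^?b^{n-?}(2n+1)^{n-2}$, matching what is needed to re-close the induction after the parameter choice $3\alpha a^2\|f\|_1^2\le b^3(2n+1)$ (or its analogue with $\alpha'$). This is exactly the ``tedious adaptation'' the section's preamble warns about; conceptually there is nothing new beyond Proposition~\ref{anallemma}, but one must track an extra sequence $g_n'$ and an extra kernel $W'$ and choose $a,b$ large enough to absorb finitely many initial cases of both the $W$- and $W'$-bounds. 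A secondary, minor point is justifying the decay/integrability of $v\in\ker L_+$ carefully enough to invoke the lemma's hypotheses; this is routine given \eqref{eq:reskernel} and Lemma~\ref{decay}, and indeed the proof of Proposition~\ref{anal2} can simply record that $\hat v\in L_2\cap L_\infty$ (a fact reused in Section~\ref{sec:nondeg} for the claim $v\in\dot H^{-1/2}$).
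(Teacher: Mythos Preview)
Your approach is correct and follows the same strategy as the paper: reduce to Fourier-side convolution inequalities, close an induction via Abel's identity, and read off exponential decay of $\hat v$. The paper organizes the coupled system slightly differently, and the difference is worth noting. Rather than carrying $f=|\hat Q|$ through a \emph{simultaneous} induction with $g=|\hat v|$ and two derived kernels $W,W'$, the paper treats the bounds $\||\xi|^n W\|_1\le ab^n(2n+1)^{n-1}$ and $\||\xi|^n \hat Q\|_p\le ab^n(2n+1)^{n-1}$ as \emph{fixed a priori inputs} --- these were already obtained in Section~\ref{sec:anal} from Proposition~\ref{anallemma} applied to $Q$ --- and then runs the induction only on the pair $(f,g):=(|\hat v|,|\tilde w|)$, where $\tilde w:=\hat Q\ast\hat v/(\pi^2|\xi|^2)$ is precisely your $W'$. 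The resulting system (Lemma~\ref{analker}) is
\[
(|\xi|+1)f\le W\ast f + V\ast g,\qquad |\xi|^2 g\le V\ast f,
\]
with $V:=\pi^{-1}|\hat Q|$ fixed. This decoupling buys a shorter induction: only two sequences of norms to track, with the $Q$-side already closed, whereas your scheme re-derives the $Q$-bounds inside the same loop. Your route would close as well under Abel's identity, exactly as you anticipate, just with more bookkeeping. Your step~(1), showing $v\in L_1$ (hence $\hat v\in L_\infty$) via the resolvent identity and the decay of $Q$, is the content of the paper's Lemma~\ref{nondegdecay}.
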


The proof of Proposition \ref{anal2} will be given at the end of the section. First, we establish the following auxiliary fact.

\begin{lemma}\label{nondegdecay}
Assume that $v\in\ker L_+$ is radial. Then $v\in L_1$ and hence $\hat v\in L_\infty$.
\end{lemma}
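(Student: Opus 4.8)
The plan is to run two successive bootstrap arguments on the resolvent form of the equation $L_+v=0$, which reads
$$
v = (\sqrt{-\Delta}+1)^{-1}\Big[ (Q^2 * |x|^{-1})\, v + 2Q\big((Qv)*|x|^{-1}\big) \Big] ,
$$
first improving the Sobolev regularity of $v$ and then its pointwise decay, until $v$ decays fast enough to be integrable. For the regularity step I would argue exactly as in the proof of Lemma \ref{smooth}: since $Q$ is smooth with bounded derivatives (by Lemma \ref{smooth} and Theorem \ref{anal}), multiplication by the bounded smooth function $Q^2*|x|^{-1}$ (which has bounded derivatives of all orders) and the linear map $\xi\mapsto 2Q\big((Q\xi)*|x|^{-1}\big)$ both send $H^s(\R^3)$ into itself for every $s\geq 1/2$ (the relevant mapping properties of the Hartree-type nonlinearity are in \cite{Le2}), whereas $(\sqrt{-\Delta}+1)^{-1}\colon H^s(\R^3)\to H^{s+1}(\R^3)$ gains one derivative. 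Starting from $v\in H^1(\R^3)$, which is the operator domain of $L_+$, this yields $v\in H^s(\R^3)$ for every $s$; in particular $v$ is continuous and bounded and $v(x)\to 0$ as $|x|\to\infty$.

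For the decay step I would exploit the explicit resolvent kernel \eqref{eq:reskernel}, which obeys $(\sqrt{-\Delta}+1)^{-1}(x,y)\leq C|x-y|^{-2}$ and $(\sqrt{-\Delta}+1)^{-1}(x,y)\leq C|x-y|^{-4}$ for all $x\neq y$, so that it lies in $L_1$ as a function of $x-y$. The elementary ingredient is the convolution estimate — proved by splitting the integral into the regions $\{|y|\leq|x|/2\}$, $\{|x-y|\leq|x|/2\}$ and the complement, and using the two kernel bounds — that if $g$ is bounded with $|g(x)|\leq C(1+|x|)^{-\alpha}$, then $\big((\sqrt{-\Delta}+1)^{-1}|g|\big)(x)\leq C_\alpha(1+|x|)^{-\alpha}$ when $0<\alpha\leq 3$ and $\leq C_\alpha(1+|x|)^{-4}$ when $\alpha>3$. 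I also record that, since $v$ is radial (whence $Qv$ is radial) and $Qv\in L_1\cap L_\infty$, Newton's theorem (cf.\ \cite[Theorem 9.7]{LiLo}) gives $\big((Qv)*|x|^{-1}\big)(x)\leq C(1+|x|)^{-1}$; combined with $Q(x)\leq C(1+|x|)^{-4}$ and $(Q^2*|x|^{-1})(x)\leq C(1+|x|)^{-1}$ from Lemma \ref{decay}, the bracket in the displayed identity is bounded, whenever $|v(x)|\leq C(1+|x|)^{-\alpha}$, by $C(1+|x|)^{-\min(\alpha+1,\,5)}$. Iterating the convolution estimate over the bracket, starting from $\alpha=0$ (boundedness), gives successively $|v(x)|\leq C(1+|x|)^{-1}$, $|v(x)|\leq C(1+|x|)^{-2}$, $|v(x)|\leq C(1+|x|)^{-3}$, and finally $|v(x)|\leq C(1+|x|)^{-4}$. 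Hence $v\in L_1(\R^3)$, and therefore $\|\hat v\|_\infty\leq(2\pi)^{-3/2}\|v\|_1<\infty$.

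The main obstacle is the long-range character of the potential $Q^2*|x|^{-1}$, which decays only like $|x|^{-1}$: it rules out a one-step argument and dictates the staircase bootstrap, and it makes the threshold $\alpha=3$ a genuine borderline case in the convolution estimate, where $\int_{|y|\leq|x|/2}(1+|y|)^{-3}\,dy$ diverges logarithmically; this is harmless, because at the next round the extra factor $(1+|x|)^{-1}$ from the potential pushes the decay rate past $3$ and the convolution estimate then upgrades it directly to the kernel rate $(1+|x|)^{-4}$. An alternative, avoiding the hands-on iteration, would be to write $v=\big(\sqrt{-\Delta}-Q^2*|x|^{-1}+1\big)^{-1}h$ with $h:=2Q\big((Qv)*|x|^{-1}\big)$ decaying like $(1+|x|)^{-5}$ and with $-1$ below the essential spectrum of $\sqrt{-\Delta}-Q^2*|x|^{-1}$, and then invoke the eigenfunction decay estimates of \cite{CaMaSi} as in the proof of Lemma \ref{decay}; the bootstrap just sketched, however, is entirely self-contained.
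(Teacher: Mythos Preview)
Your argument is correct, but it differs from the paper's. The paper does not pass through pointwise decay at all: it runs a two-step \emph{Lebesgue-space} bootstrap. Writing $v=(\sqrt{-\Delta}+1)^{-1}(f_1+f_2)$ with $f_1=(Q^2*|x|^{-1})v$ and $f_2=2Q\big((Qv)*|x|^{-1}\big)$, the paper first observes (using Hardy plus Newton, exactly as you do) that $|f_2|\leq C(1+|x|)^{-5}\in L_1$. For $f_1$ it uses only H\"older: $Q^2*|x|^{-1}\in L_{3+}$ and $v\in L_2$ give $f_1\in L_{6/5+}$; since the resolvent kernel lies in $L_1$, Young then yields $v\in L_{6/5+}$. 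One more H\"older step, now pairing $v\in L_{6/5+}$ with $Q^2*|x|^{-1}\in L_{6-}$, puts $f_1$ into $L_1$, and a final convolution with the $L_1$ kernel gives $v\in L_1$.

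Compared with this, your route first invests in a regularity bootstrap to get $v\in L_\infty$, and then iterates a pointwise convolution estimate four times. The payoff is that you actually obtain the sharp decay $|v(x)|\leq C(1+|x|)^{-4}$, which is more than the lemma asks for; the cost is the extra regularity step and the somewhat longer staircase, including the handling of the borderline $\alpha=3$ case. The paper's argument is shorter and never leaves $L_p$-spaces, needing nothing beyond $v\in H^1$. Your alternative suggestion via the resolvent $(\sqrt{-\Delta}-Q^2*|x|^{-1}+1)^{-1}$ and \cite{CaMaSi} is also viable and closer in spirit to how Lemma~\ref{decay} is proved.
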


\begin{proof}
We have
$$
v=(\sqrt{-\Delta}+1)^{-1} (f_1 + f_2)
$$
where $f_1:= (Q^2*|x|^{-1}) v$ and $f_2:=2((Qv)*|x|^{-1})Q$. Let us first consider $f_2$ and we note that
\begin{equation*}
|f_2(x)| \leq C (1+|x|)^{-4} | (Qv) \ast |x|^{-1}|(x) \leq C (1 + |x|)^{-5},
\end{equation*} 
Here, the pointwise bound on $Q$ comes from Lemma \ref{decay} and the pointwise bound on $(Qv) \ast |x|^{-1}$ follows from combining Hardy's inequality (to get an $L_\infty$-bound) and Newton's theorem to conclude that $|(Qv) \ast |x|^{-1}|(x) \leq C/|x|$ for $|x| >0$.

Next, we note that $f_1\in L_{6/5+}$, since $Q^2*|x|^{-1}\in L_{3+}$ and $v\in L_2$. Since $(\sqrt{-\Delta}+1)^{-1}$ is the convolution by a function in $L_1$ (indeed, a function bounded by a constant times $\min\{|x|^{-2},|x|^{-4}\}$), we conclude that $v\in L_{6/5+}$. But this implies that $f_1\in L_1$, and hence $v$ is the convolution of an $L_1$ kernel with the $L_1$ function $f_1+f_2$, and hence $v$ must be in $L_1$.
\end{proof}

As a next step and similarly as in Section \ref{sec:anal}, we prove the following statement.

\begin{lemma}\label{analker}
Let $W$ and $V$ be non-negative functions on $\R^n$ satisfying for some $a,b>0$ and all $n\in\N_0$ and $1\leq p\leq \infty$
\begin{equation}\label{eq:analkerass}
\| |\xi|^n W \|_1 \leq ab^{n} (2n+1)^{n-1} \,,
\qquad
\| |\xi|^n V \|_p \leq ab^{n} (2n+1)^{n-1} \,.
\end{equation}
Let $\lambda >0$, $0\leq f\in L_2(\R^3)\cap L_\infty(\R^3)$ and $g\geq 0$ measurable such that
\begin{equation}\label{eq:analker}
\begin{split}
(|\xi|+\lambda)f \leq W * f + V*g \,,
\qquad
|\xi|^2 g \leq V*f \,.
\end{split}
\end{equation}
Then there exist $\tilde a,\tilde b>0$ such that for all $n\in\N_0$,
\begin{equation}
\label{eq:analkerind}
\| |\xi|^n f \|_\infty \leq \tilde a \tilde b^{n} (2n+1)^{n-1} \,,
\qquad
\| |\xi|^{n+2} g \|_\infty \leq \tilde a \tilde b^{n+2} (2(n+2)+1)^{(n+2)-1} \,.
\end{equation}
\end{lemma}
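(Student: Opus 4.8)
The plan is to mimic the bootstrap/induction argument from Proposition \ref{anallemma}, but now for the coupled pair $(f,g)$ with the two ``driving'' kernels $W$ and $V$ which themselves already satisfy the factorial-type bounds \eqref{eq:analkerass}. Concretely, I would prove by induction on $n$ that there exist non-negative functions $g_n$ (a convolution-kernel majorant, playing the role of the $g_n$ in Proposition \ref{anallemma}) and $h_n$ such that
\begin{equation*}
|\xi|^n f \leq g_n * f \,, \qquad |\xi|^{n+2} g \leq h_n * f \,,
\end{equation*}
together with the norm bounds $\|g_n\|_1 \leq \tilde a \tilde b^{\,n}(2n+1)^{n-1}$ and $\|h_n\|_1 \leq C \tilde a \tilde b^{\,n+2}(2(n+2)+1)^{(n+2)-1}$. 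Granting this, \eqref{eq:analkerind} follows at once from $\||\xi|^n f\|_\infty \leq \|g_n\|_1 \|f\|_\infty$ and likewise for $g$, using $f\in L_\infty$.

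For the induction step I would split $|\xi|^n = |\xi|\cdot|\xi|^{n-1}$ and apply the first inequality in \eqref{eq:analker}, so that $|\xi|^n f \leq |\xi|^{n-1}(W*f) + |\xi|^{n-1}(V*g)$. Expanding $|\xi|^{n-1}$ by the triangle inequality $|\xi|^{n-1}\leq \sum_l \binom{n-1}{l}|\xi-\eta|^l|\eta|^{n-1-l}$ distributes the powers onto $W$ (resp.\ $V$) and onto $f$ (resp.\ $g$); the powers on $W$, $V$ are controlled by \eqref{eq:analkerass}, the powers on $f$ by the induction hypothesis, and — crucially — the powers on $g$ by the \emph{other} half of the induction hypothesis, written as $|\xi|^{n-1-l}g \leq h_{n-3-l}*f$ for $n-1-l\geq 2$ (the low-order terms $|\xi|^0 g$, $|\xi|^1 g$ being handled separately from the second inequality in \eqref{eq:analker}, $|\xi|^2 g \leq V*f$, which after one more use of \eqref{eq:analker} or a crude bound gives $g\in L_1$-type control). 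One then reads off the recursion for $g_n$ as a finite sum of convolutions of $|\eta|^l W$, $|\eta|^l V$ with $g_{n-1-l}$'s and $h$'s, takes $L_1$-norms, and collapses the resulting binomial sums of $(2k+1)^{k-1}$-type terms using Abel's identity \eqref{eq:abel}, exactly as in the proof of Proposition \ref{anallemma}. The bound for $h_n$ comes similarly from $|\xi|^{n+2}g \leq |\xi|^n(V*f) \leq \sum_l \binom{n}{l}(|\eta|^l V)*(|\eta|^{n-l}f) \leq \sum_l \binom{n}{l}(|\eta|^l V)*g_{n-l}*f$, again closed by Abel's identity. Finally one checks that $\tilde a,\tilde b$ can be chosen (fixing an appropriate ratio, as in the previous proof) so that all the initial cases $n=0,1$ and the recursive inequality of the form ``$(\text{const})\,\alpha \tilde a^2\|f\|_1^2 \leq \tilde b^{3}(2n+1)$'' hold for every $n\geq 1$.

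The main obstacle — and the reason the authors call the adaptation ``tedious'' — is precisely the bookkeeping of the coupled recursion: one now has two sequences of majorant kernels whose recursions feed into each other, with several ranges of indices ($l$ large vs.\ $l\in\{0,1\}$, $n-1-l\geq 2$ vs.\ $<2$) requiring separate treatment, and one must verify that every binomial convolution sum that arises is of a shape to which the Abel identity \eqref{eq:abel} applies so that the $(2n+1)^{n-1}$ growth is reproduced rather than worsened. There is no conceptual novelty beyond Proposition \ref{anallemma}; the work is in choosing the exact exponents in the ansatz for $\|g_n\|_1$ and $\|h_n\|_1$ so that the constants close, and in absorbing the finitely many low-order terms into $\tilde a$. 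Once Lemma \ref{analker} is in hand, Proposition \ref{anal2} will follow by applying it with $f=|\hat v|$, $g = $ (a suitable majorant of) $|\widehat{Qv}|$ or the relevant convolution, $W$ built from the analytic function $Q$ via Proposition \ref{anallemma} (whose conclusion \eqref{eq:analp} gives exactly \eqref{eq:analkerass}), and $V$ similarly from $Q$; then summing the series $\sum_n \tfrac{\tau^n}{n!}\||\xi|^n f\|_\infty$ and using the ratio test as in the proof of Theorem \ref{anal} yields a strip of analyticity for $v$.
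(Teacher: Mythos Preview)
Your overall strategy---induction in $n$, triangle-inequality expansion of $|\xi|^{n-1}$ (resp.\ $|\xi|^n$) under the convolution, and closing the binomial sums via Abel's identity \eqref{eq:abel}---matches the paper's proof exactly, as does your observation that the low-order terms $g$ and $|\xi|g$ require a separate $L_1$-type control and that the $V*g$ sum must be split into two index ranges.

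There is, however, a genuine gap in your specific formulation. You propose to carry through the induction the \emph{pointwise} majorants $|\xi|^n f \leq g_n * f$. But in the induction step the expansion of $|\xi|^n f$ produces, for $l=n-1$ and $l=n-2$, the terms $(|\eta|^{n-1}V)*g$ and $(|\eta|^{n-2}V)*(|\eta|g)$, and neither $g$ nor $|\eta|g$ can be dominated pointwise by anything of the form $(\text{kernel})*f$: from $|\xi|^2 g \leq V*f$ one only gets $g(\xi)\leq |\xi|^{-2}(V*f)(\xi)$, which is a multiplication, not a convolution. So the invariant ``$|\xi|^n f \leq g_n * f$'' cannot be propagated.

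The paper avoids this by \emph{not} introducing majorant kernels at all: it proves the $L_\infty$ bounds in \eqref{eq:analkerind} directly by induction. After a preliminary step establishing $g,\,|\xi|g\in L_1$ and $|\xi|^2 g\in L_\infty$, one takes $L_\infty$ norms in the expanded inequality for $|\xi|^n f$ and applies Young's inequality term by term: for $l\leq n-3$ use $\||\eta|^l V\|_1\,\||\eta|^{n-1-l}g\|_\infty$ (both controlled by \eqref{eq:analkerass} and the induction hypothesis on $g$), while for $l\in\{n-2,n-1\}$ switch the pairing to $\||\eta|^l V\|_\infty\,\||\eta|^{n-1-l}g\|_1$ (using the $p=\infty$ case of \eqref{eq:analkerass} and the preliminary $L_1$ bounds on $g,|\xi|g$). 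The resulting sums are then all of the $(2k+1)^{k-1}$ shape and collapse via \eqref{eq:abel}. The closing condition is also simpler than the one you wrote: since $W$ and $V$ already satisfy \eqref{eq:analkerass}, no ``$\alpha\tilde a^2\|f\|_1^2$'' term arises; one merely enlarges $b$ so that a few linear inequalities (e.g.\ $2\tilde a\leq b$) hold. Dropping the kernel-majorant framing in favor of direct norm bounds is the only modification your argument needs.
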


\begin{proof}
We begin by showing that $g$ and $|\xi|g$ are integrable and that $|\xi|^2 g$ is bounded. To see this, note that since $V\in L_1\cap L_2$ and $f\in L_2$, $h:=|\xi|^2 g \leq V*f \in L_2 \cap L_\infty$ and therefore
$$
\int_{\R^3} g \,d\xi \leq \|h\|_\infty \int_{|\xi|<1} |\xi|^{-2} \,d\xi  + \|h\|_2 \left( \int_{|\xi|>1} |\xi|^{-4} \,d\xi \right)^{1/2} <\infty \,.
$$
Using this information, as well as $W\in L_1$, $f\in L_2$, $V\in L_2$, we find $|\xi| f \leq W * f + V*g \in L_2$. By the triangle inequality, $|\xi| h \leq |\xi|(V*f) \leq (|\eta|V)*f + V*(|\eta| f) \in L_2\cap L_\infty$, and therefore
$$
\int_{\R^3} |\xi| g \,d\xi \leq \|h\|_\infty \int_{|\xi|<1} |\xi|^{-1} \,d\xi  + \||\xi| h\|_2 \left( \int_{|\xi|>1} |\xi|^{-4} \,d\xi \right)^{1/2} <\infty \,.
$$

We define
$$
\tilde a := \max\{\|f\|_\infty , \| g \|_1 \} \,.
$$
Since \eqref{eq:analkerass} remains true if $b$ is increased, we may assume that
$$
b \geq \max\{ \left(\| |\xi|^2 g \|_\infty/ (5 \tilde a) \right)^{1/2}, \| |\xi| g \|_1 / \tilde a , 2\tilde a, \left( 2\tilde a \right)^{1/2}/7  \}
$$
Note that these choices imply that
\begin{equation}\label{eq:g1est}
\| |\xi|^{l} g \|_1 \leq \tilde a b^{l} (2l+1)^{l-1} 
\qquad \text{for } l=0,1 \,.
\end{equation}

Having modified $b$ in this way, we shall prove \eqref{eq:analkerind} with $\tilde b=b$ (and $\tilde a$ as defined above). We proceed by induction with respect to $n\in\N_0$. For $n=0$ the assertion is an immediate consequence of our choices for $\tilde a$ and $b$. Now let $n\geq 1$ and assume that \eqref{eq:analkerind} has already been shown for all smaller values of $n$.  By \eqref{eq:analker} and the triangle inequality
$$
|\xi|^n f \leq \sum_{l=0}^{n-1} \binom{n-1}{l} \left( \left( |\eta|^l W \right) * \left(|\eta|^{n-1-l} f \right) + \left(|\eta|^l V \right) * \left(|\eta|^{n-1-l} g \right) \right)
$$
and therefore
\begin{align*}
\| |\xi|^n f \|_\infty \leq \sum_{l=0}^{n-1} \binom{n-1}{l} \| |\xi|^l W \|_1 \||\xi|^{n-1-l} f \|_\infty 
& + \sum_{l=0}^{n-3} \binom{n-1}{l} \| |\xi|^l V \|_1 \| \|\xi|^{n-1-l} g \|_\infty \\
& + \sum_{l=n-2}^{n-1} \binom{n-1}{l} \| |\xi|^l V \|_\infty \| \|\xi|^{n-1-l} g \|_1 \,.
\end{align*}
(The middle sum should be discarded if $n\leq 2$.) Hence by the induction hypothesis, by \eqref{eq:analkerass} and \eqref{eq:g1est}
$$
\| |\xi|^n f \|_\infty \leq 2a \tilde a b^{n-1} \sum_{l=0}^{n-1} \binom{n-1}{l} (2l+1)^{l-1} (2(n-l-1)+1)^{n-l-2} 
= 4 a \tilde a b^{n-1} (2n)^{n-2}
\,.
$$
In the last calculation we used Abel's identity \eqref{eq:abel}. This proves the first assertion in \eqref{eq:analkerind}, provided we have
$$
4 a \tilde a b^{n-1} (2n)^{n-2} \leq a b^n (2n+1)^{n-1}
$$
for all $n\geq 1$. It is easy to see that this holds if $2\tilde a\leq b$, which holds by the choice of $b$.

To prove the second assertion in \eqref{eq:analkerind} we proceed similarly as before, using the triangle inequality to get
$$
|\xi|^{n+2} g \leq \sum_{l=0}^{n} \binom{n}{l} \left( |\eta|^l V \right) * \left(|\eta|^{n-l} f \right)
$$
and hence
$$
\| |\xi|^{n+2} g \|_\infty
\leq \sum_{l=0}^{n} \binom{n}{l} \| |\eta|^l V \|_1  \| |\eta|^{n-l} f \|_\infty \,.
$$
Relation \eqref{eq:analkerass} together with the estimates for $|\xi|^{n-l} f$ (which we have already proved) and Abel's identity \eqref{eq:abel} imply that
$$
\| |\xi|^{n+2} g \|_\infty
\leq a \tilde a b^{n} \sum_{l=0}^{n} \binom{n}{l} (2l+1)^{l-1} (2(n-l) +1)^{n-l-1}
= 2 a \tilde a b^{n} (2n+2 )^{n-1} \,.
$$
This proves the second assertion in \eqref{eq:analkerind}, provided we have
$$
2 a \tilde a b^{n} ( 2n+2 )^{n-1} \leq a b^{n+2} (2(n+2)+1)^{(n+2)-1}
$$
for all $n\geq 1$. It is easy to see that this holds if $\tilde a\leq 49 b^2/2$, which holds by the choice of $b$. This completes the proof of Lemma \ref{analker}.
\end{proof}

\begin{proof}[Proof of Proposition \ref{anal2}]
For the Fourier transform $\hat v$, the equation $L_+ v=0$ leads to
$$
|\xi| \hat v - w*\hat v -\tilde w*\hat Q = - \hat v
$$
with $w$ as in the proof of Proposition \ref{anal} and
$$
\tilde w(\xi) :=  \frac{ \hat Q * \hat v (\xi)}{\pi^2 |\xi|^2 } \,.
$$
Hence $f:=|\hat v|$ and $g:=|\tilde w|$ satisfy \eqref{eq:analker} with $W:=|w|$ and $V:= \pi^{-1}|\hat Q|$. (Indeed, we know that $\hat Q>0$, but we do not need this fact.) Now the assumptions \eqref{eq:analkerass} can be deduced from \eqref{eq:analp} and \eqref{eq:wdecay} after modifying $a$ and $b$. (Strictly speaking, we use the equation before \eqref{eq:wdecay} which yields the term $(2l+2)$ in \eqref{eq:wdecay} replaced by $(2l+1)$.) Moreover, $f\in L_\infty$ by Lemma \ref{nondegdecay}. Now the statement of Proposition \ref{anal2} follows as in the proof of Theorem \ref{anal}.
\end{proof}



\bibliographystyle{amsalpha}

\end{document}